\numberwithin{equation}{section}
\newtheorem{dummy}{dummy}[section]
\newtheorem{definition}[dummy]{Definition}
\newtheorem{theorem}[dummy]{Theorem}
\newtheorem{corollary}[dummy]{Corollary}
\newtheorem{lemma}[dummy]{Lemma}
\newtheorem{remark}[dummy]{Remark}
\newtheorem{example}[dummy]{Example}
\def\C{\mathbb C}
\def\P{\mathbb P}
\def\Q{\mathbb Q}
\def\Z{\mathbb Z}
\def\KK{\mathcal K}
\def\NN{\mathcal N}
\def\OO{\mathcal O}
\def\pt{\mathrm{pt}}
\def\={\;=\;}
\def\bal{\begin{aligned}}
\def\eal{\end{aligned}}
\def\be{\begin{equation}\label}
\def\ee{\end{equation}}
\def\wt{\widetilde}
\def\ol{\overline}
\def\eps{\epsilon}
\DeclareMathOperator{\rk}{rk}
\DeclareMathOperator{\Stab}{Stab}
\def\Hom {\operatorname{Hom}\nolimits}
\title[Bogomolov-Prokhorov invariant as equivariant cohomology]
{The Bogomolov-Prokhorov invariant of surfaces as equivariant cohomology}
\author{Evgeny Shinder}
\begin{document}
\maketitle

\begin{abstract}
For a complex smooth projective surface $M$ with an action of a finite cyclic group $G$ we give a uniform proof of the isomorphism 
between the invariant $H^1(G, H^2(M, \Z))$ and the first cohomology of the divisors fixed by the action, using $G$-equivariant
cohomology. This generalizes the main result of Bogomolov and Prokhorov \cite{BP}. 
\end{abstract}

\section{Introduction}

Let $M$ be a smooth projective complex variety 
and $G$ be a finite group acting faithfully on $M$.
In this note we consider the cohomology group
\be{H1}
H^1(G, H^2(M, \Z)).
\ee
of the singular cohomology $H^2(M,\Z)$ considered as a $G$-module.
It is easy to see that this cohomology group is a stable
birational invariant of the pair $(G,M)$ (see Section \ref{sec-invar}).

\medskip

In a slightly different context, when $M$ is defined over
an arbitrary field $K$, the analogous group
\[
H^1(Gal(K^{sep}/K), Pic(M)) 
\]
has been considered by Manin who used it to study
rationality of del Pezzo surfaces over $K$ \cite{M1, M2}.

\medskip

Bogomolov and Prokhorov used the cohomology group 
\be{H1-BP}
H^1(G, Pic(M)) 
\ee
in \cite{BP} when $M$ is a smooth rational surface and $G$ is a finite group to study conjugacy classes in
the Cremona groups. Note that for such a surface we have a canonical isomorphism 
$H^2(M, \Z) \simeq Pic(M)$ so that
in this case (\ref{H1}) and (\ref{H1-BP}) are the same.
Theorem 1.1 in \cite{BP} describes (\ref{H1-BP}) for a cyclic group $G$ of prime order
and a smooth rational surface $M$ in terms of fixed curves; 
their proof is a case-by-case study based on two-dimensional equivariant minimal model program \cite{Isk}.

\medskip

In this paper we give a uniform proof of Theorem 1.1 in \cite{BP}
and naturally obtain a more general result: 

\begin{theorem}\label{main-thm}
Let $M$ be a smooth projective surface with $H_1(M, \Z) = 0$.
Let $G = \Z/m$ act on $M$ non-trivially. Assume that for every $P \in M$
the stabilizer $\Stab(P)$ is trivial or coincides with $G$ and that the action
has a fixed point.
Then we have a natural isomorphism
\[
H^1(G, H^2(M, \Z)) \simeq \bigoplus_{D \subset M^G} H^1(D, \Z) \otimes \Z/m
\]
where the sum is over all curves $D \subset M$ pointwise fixed by $G$.
\end{theorem}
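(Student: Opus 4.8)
The plan is to compute both sides through the Borel $G$-equivariant cohomology $H^*_G(M,\Z) := H^*(M\times_G EG,\Z)$ and to match them in a single large cohomological degree, where the cyclic group $G=\Z/m$ makes everything $2$-periodic. First I would record the relevant features of $M$. Since $H_1(M,\Z)=0$, universal coefficients gives $H^1(M,\Z)=0$ and shows $A:=H^2(M,\Z)$ is torsion-free; Poincar\'e duality gives $H^3(M,\Z)\cong H_1(M,\Z)=0$, while $H^0(M,\Z)=H^4(M,\Z)=\Z$ are trivial $G$-modules (the action is holomorphic, hence orientation-preserving). The intersection form $A\otimes A\to\Z$ is then unimodular and $G$-invariant, so $A\cong\Hom(A,\Z)$ as $G$-modules. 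On the geometric side, the hypothesis on stabilizers says precisely that $G$ acts freely on $U:=M\setminus M^G$; the fixed locus $F:=M^G$ is a smooth disjoint union of isolated points $P$ and curves $D$, and along each curve the normal character is faithful, since otherwise a nontrivial subgroup of $G$ would fix points of $U$.

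Next I would set up the Leray--Serre spectral sequence $E_2^{p,q}=H^p(G,H^q(M,\Z))\Rightarrow H^{p+q}_G(M,\Z)$ of the fibration $M\to M\times_G EG\to BG$. By the vanishing above only the rows $q=0,2,4$ are nonzero, and since the rows $q=1,3$ vanish all $d_2$ differentials are zero. The bottom row is $E_2^{p,0}=H^p(G,\Z)=H^p(BG)$. The existence of a fixed point $P_0$ gives an equivariant section of $M\to\mathrm{pt}$, hence a retraction $H^*_G(M)\to H^*(BG)$ splitting the edge map $H^*(BG)\to H^*_G(M)$; this forces the bottom row to survive to $E_\infty$, i.e.\ every differential \emph{into} row $0$ vanishes. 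In particular $d_3\colon E_3^{p,2}\to E_3^{p+3,0}$ and $d_5\colon E_5^{p,4}\to E_5^{p+5,0}$ are zero. The only differential that can still disturb row $2$ is the incoming $d_3\colon E_3^{p,4}\to E_3^{p+3,2}$, and ruling it out is the crux of the argument. For this I would use fibrewise Poincar\'e duality for the closed oriented $4$-manifold $M$: integration over the fibre together with cup product yields pairings $E_2^{p,q}\otimes E_2^{p',4-q}\to H^{p+p'}(G,\Z)$ which, on row $2$, are the Tate cup-product pairings $\hat H^{i}(G,A)\otimes\hat H^{j}(G,A)\to\hat H^{0}(G,\Z)$; by unimodularity (so that $A$ is self-dual) and Tate duality these are perfect. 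Under this duality $d_3\colon E_3^{p,4}\to E_3^{p+3,2}$ is adjoint to $d_3\colon E_3^{p',2}\to E_3^{p'+3,0}$, which we have just shown vanishes, hence the incoming $d_3$ on row $2$ vanishes as well. This duality step is the main obstacle, as it requires care with the torsion pairings landing in $H^*(BG)=\Z/m$.

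Finally I would extract the answer in one large odd degree $k$. On one hand, for odd $k$ the internal degrees $0$ and $4$ are odd and contribute nothing, so only row $2$ survives and there is no extension problem: $H^k_G(M)=E_\infty^{k-2,2}=H^{k-2}(G,A)\cong H^1(G,A)$ by $2$-periodicity of cyclic cohomology. On the other hand, since $G$ acts freely on $U$ we have $H^*_G(U)=H^*(U/G)$, which vanishes above the real dimension $4$; the localization long exact sequence for $F\subset M$ then gives $H^k_G(M)\cong H^k_{G,F}(M)$ for $k$ large. The equivariant Thom isomorphism identifies $H^k_{G,F}(M)$ with $\bigoplus_P H^{k-4}(BG)\oplus\bigoplus_D H^{k-2}(D\times BG)$, using $D\times_G EG=D\times BG$. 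For $k$ odd the point contributions $H^{k-4}(BG)$ vanish, and the K\"unneth formula (all groups $H^*(D,\Z)$ being free) leaves only $H^1(D,\Z)\otimes H^{k-3}(BG)=H^1(D,\Z)\otimes\Z/m$. Combining the two computations yields the isomorphism $H^1(G,H^2(M,\Z))\cong\bigoplus_{D\subset M^G}H^1(D,\Z)\otimes\Z/m$; naturality is automatic since every map used (edge maps, localization sequence, Thom isomorphisms) is canonical.
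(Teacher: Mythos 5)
Your argument is correct in outline but takes a genuinely different route from the paper on both halves of the computation. The paper works exclusively in total degree $3$: there the incoming differential into $E^{1,2}_3$ would originate at $E^{-2,4}_3=0$, so only the outgoing $d_3$ to $E^{4,0}_3$ has to be killed by the fixed-point section and no duality is needed; the price is that the geometric identification of $H^3_G(M,\Z)$ is carried out by comparing $H^*_c(M/G,\Z)$ with $H^*_{c,G}(M,\Z)$ through a long exact sequence with correction terms $K^j=H^j_c(M^G\times BG,\Z)/H^j_c(M^G,\Z)$ (Theorem \ref{thm-seq}), and the crux there is the surjectivity of $K^2\to H^3(M/G,\Z)$, proved by identifying the boundary maps with lens-space classes at the isolated fixed points and using $H_1(M/G,\Z)=0$ (Lemmas \ref{lem-delta}, \ref{lem-delta-surj}, \ref{lem-H1}). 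You instead extract everything in a single large odd degree, which makes the geometric side much cleaner---localization away from the free locus plus the equivariant Thom isomorphism replaces the entire analysis of the quotient singularities---but transfers the difficulty to the spectral sequence, where you must additionally kill the incoming $d_3\colon E_3^{k-5,4}\to E_3^{k-2,2}$. Your duality argument does work and can be made precise: for $x\in E_3^{k-5,4}$ and $y\in E_3^{p',2}$ the product $x\cup y$ lies in $E_3^{*,6}=0$, so the Leibniz rule gives $d_3x\cup y=\pm\, x\cup d_3y=0$ (the outgoing $d_3$ on row $2$ having already been killed by the section), and since the $E_2=E_3$ pairing on row $2$ is the group-cohomology cup product induced by the unimodular $G$-invariant intersection form, Tate duality for the self-dual lattice $H^2(M,\Z)$ makes it perfect in even total degree, forcing $d_3x=0$. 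The points that still need care are the standard but nontrivial identification of the multiplicative structure on the $E_2$-page with the group-cohomology cup product, and the consistency of the periodicity identifications $H^{k-2}(G,H^2(M,\Z))\simeq H^1(G,H^2(M,\Z))$ and $H^{k-3}(BG,\Z)\simeq\Z/m$ (both use a chosen generator of $H^2(BG,\Z)$, and one should check the composite isomorphism is independent of this choice and of $k$). In exchange, your route dispenses with Theorem \ref{thm-seq} and the lens-space lemmas altogether, while the paper's degree-$3$ argument avoids any duality input and yields, as by-products, the intermediate isomorphism $H^1(G,H^2(M,\Z))\simeq H^3_G(M,\Z)$ in arbitrary dimension and explicit information about $H^*(M/G,\Z)$.
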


\begin{remark}
If $G$-action on $M$ is free, then for the smooth quotient $X = M/G$ we have $\chi(X, \OO_X) = \frac1{|G|}\cdot \chi(M, \OO_M)$.
If $\chi(M, \OO_M) = 1$ (for example if $M$ is a rational surface), then the action can not be free, and the condition
about the existence of a fixed point in Theorem \ref{main-thm} is automatically satisfied.
\end{remark}

\begin{example}
Let $M \overset{\pi}{\to} \P^2$ be a double cover ramified in a smooth curve $C$ of degree $2d$.
The cases $d = 1, 2, 3$ correspond to $M$ being a quadric, a del Pezzo surface of degree $2$
and a $K3$ surface respectively. Let $G = \Z/2$ act on $M$ by permuting points in the fibers of $M \to \P^2$.
Let $H \in H^2(M, \Z)$ be the pull-back of $\OO(1)$ from $\P^2$.

Let $r = \rk H^2(M, \Z)$. Since $H^2(\P^2, \Q) = H^2(M, \Q)^G$ the $G = \Z/2$ action on $H^2(M, \Z)$ has $1$ eigenvalue $+1$
and $r-1$ eigenvalues $-1$.

Let $\eps$ denote the nontrivial integral character of $G$. Then $H^1(G, \eps) = \Z/2$.
Consider the short exact sequence of $G$-modules
\[
0 \to \eps^{r-1} \to H^2(M, \Z) \overset{\cdot H}{\to} \Z \to 0
\]
and the corresponding long exact sequence of cohomology groups of $G$:
\[
0 \to H^2(M, \Z)^G \to \Z \to (\Z/2)^{r-1} \to H^1(G, H^2(M, \Z)) \to 0.
\]
One can see that the first map is $\Z \overset{\cdot 2}{\to} \Z$ thus
\[
H^1(G, H^2(M, \Z)) \simeq (\Z/2)^{r-2}. 
\]
Since the only fixed curve of the action is $C$, and it has genus $\frac{(2d-1)(2d-2)}{2}$ 
using Theorem \ref{main-thm} we get
\[
r = \rk H^2(M, \Z) = (2d-1)(2d-2)+2 = 2(2d^2 - 3d + 2)
\]
which for the cases $d=1, 2, 3$ gives $r=2$ (quadric), $r=8$ (del Pezzo surface of degree $2$), $r=22$ ($K3$ surface) respectively.
\end{example}

Our approach to Theorem \ref{main-thm} 
is based on usual Algebraic Topology tools such as the equivariant cohomology group $H^*_G(M,\Z)$,
the Hochschild-Serre spectral sequence and cohomology long exact sequences. These tools are
developed in Sections \ref{sec-equiv}--\ref{sec-surf}.
We finally prove Theorem \ref{main-thm} in Section \ref{sec-surf} by composing two natural isomorphisms
\[
H^1(G, H^2(M, \Z)) \simeq H^{3}_G(M, \Z) \simeq \bigoplus_{D \subset M^G} H^1(D, \Z) \otimes \Z/m.
\]
The intermediate isomorphisms in the chain above
are obtained in Theorem \ref{thm-equiv} (which holds in a more general setting)
and in Theorem \ref{thm-H3}. 
Roughly speaking the reason why $\Z/m$-torsion appears in the equivariant cohomology is that the cohomology
of the infinite lens space $B(\Z/m) = S^\infty / (\Z/m)$ consists of $\Z/m$ in every positive even degree,
see Example \ref{ex-HZp}.

The main technical step in the proof is the long exact sequence of Theorem \ref{thm-seq} which
relates cohomology of the quotient $M/G$ to the $G$-equivariant cohohomology of $M$.

In Sections \ref{sec-chern} and \ref{sec-gysin} we briefly recall the construction of equivariant Chern classes
and equivariant Gysin homomorphism respectively and show that the isomorphism of Theorem \ref{main-thm}
is a particular case of a Gysin homomorphism, see Theorem \ref{thm-gysin} and Remark \ref{rem-gysin}.

\subsection*{Notation}

All varieties are considered over the base field $\C$. The variety $M$ is smooth but not necessarily projective,
and the group $G$ is finite.
All the singular and equivariant singular cohomology and homology groups are considered with integer coefficients,
and the notation $\wt{H}^*(X, \Z)$ stands for the reduced cohomology $\wt{H}^*(X, \Z) = H^*(X, \Z) / H^0(X, \Z)$.

\subsection*{Acknowledgements}

The author is grateful to John Greenlees, Yuri Prokhorov and Constantin Shramov for discussions and encouragement, 
to Tom Sutton for his help with homotopy fibers and cofibers,
to Ieke Moerdijk for e-mail correspondence, and to Hamid Ahmadinezhad and Miles Reid for
the invitation to the Rationality Questions workshop at the University of Bristol
where parts of this work were presented for the first time.

\section{The invariant $H^1(G, H^2(M,\Z))$}
\label{sec-invar}

We say that two smooth projective varieties $M$, $M'$ endowed with a $G$-action are stably $G$-birational if
there exists a $G$-equivariant birational isomorphism between $M \times \P^n$ and $M' \times \P^m$
for some $m, n$, with the $G$-action on $\P^n$ and $\P^m$ assumed to be trivial.

The following lemma is well-known in slightly different context, see \cite[Corollary of Theorem 2.2]{M1}, \cite[2.5, 2.5.1]{BP}:

\begin{lemma}
Let $G$ be a finite group. 
The first group cohomology group $H^1(G, H^2(M, \Z))$ only depends on stable $G$-birational class of 
a smooth projective variety $M$.
\end{lemma}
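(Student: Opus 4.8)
The plan is to reduce the statement, via $G$-equivariant weak factorization, to two elementary computations of the $G$-module structure of $H^2$. By definition a stable $G$-birational equivalence is a $G$-equivariant birational isomorphism $M \times \P^n \cong M' \times \P^m$, so it suffices to prove that $H^1(G, H^2(-, \Z))$ is unchanged (a) under the operation $M \rightsquigarrow M \times \P^n$ with trivial action on $\P^n$, and (b) under $G$-equivariant birational isomorphisms of smooth projective $G$-varieties. The claim for $M$ and $M'$ then follows by chaining the two isomorphisms coming from (a) with the one coming from (b).

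First I would handle multiplication by projective space. By the K\"unneth formula together with $H^1(\P^n, \Z) = 0$, there is a $G$-equivariant isomorphism
\[
H^2(M \times \P^n, \Z) \cong H^2(M, \Z) \oplus H^2(\P^n, \Z),
\]
and since $G$ acts trivially on $\P^n$ the second summand is the trivial module $\Z$. As $G$ is finite, $H^1(G, \Z) = \Hom(G, \Z) = 0$, so passing to group cohomology gives $H^1(G, H^2(M \times \P^n, \Z)) \cong H^1(G, H^2(M, \Z))$.

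Next I would handle $G$-equivariant birational isomorphisms. By the equivariant weak factorization theorem, any such isomorphism between smooth projective $G$-varieties factors as a sequence of $G$-equivariant blow-ups and blow-downs along smooth $G$-invariant centers, so it is enough to treat a single blow-up $\pi \colon \wt{M} \to M$ along a smooth $G$-invariant center $Z$. If $\codim Z = 1$ then $\pi$ is an isomorphism and there is nothing to prove, so assume $\codim Z \geq 2$. The blow-up formula then yields a $G$-equivariant splitting
\[
H^2(\wt{M}, \Z) \cong H^2(M, \Z) \oplus H^0(Z, \Z),
\]
in which $\pi^*$ realizes the first summand and the second summand is spanned by the classes of the exceptional divisors, so that it is the permutation $G$-module on the set of connected components of $Z$.

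The key point is that this permutation module contributes nothing to $H^1(G, -)$. Writing it as a direct sum of modules $\Z[G/H]$ indexed by the orbits of components, with $H$ the stabilizer of a component, Shapiro's lemma gives $H^1(G, \Z[G/H]) \cong H^1(H, \Z) = \Hom(H, \Z) = 0$ since $H$ is finite. Therefore $H^1(G, H^2(\wt{M}, \Z)) \cong H^1(G, H^2(M, \Z))$, completing the reduction. The main obstacle is invoking the equivariant weak factorization theorem and verifying that the blow-up formula splits $G$-equivariantly with exceptional part a permutation module; once these structural facts are in place, the cohomological vanishing is immediate.
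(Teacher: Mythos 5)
Your proposal is correct and follows essentially the same route as the paper: reduce via the K\"unneth formula and the $G$-equivariant weak factorization theorem to a single blow-up along a smooth $G$-invariant center, identify the extra summand of $H^2$ as a permutation module, and kill its $H^1$ by Shapiro's lemma together with $H^1(H,\Z)=\Hom(H,\Z)=0$. The only cosmetic differences are that you explicitly dispose of the codimension-one case and restrict to the trivial action on the projective-space factor, neither of which changes the argument.
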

\begin{proof}
We need to check invariance under $G$-birational isomorphism and multiplication by projective spaces $\P^n$.
The key point in the proof is that for an abelian group $A$ with a trivial $G$-action the first cohomology group is computed as
\[
H^1(G, A) = \Hom(G, A)  
\]
and thus this group is zero as soon as $A$ is torsion-free.

Checking that the cohomology group $H^1(G, H^2(M))$ does not change when $M$ is multiplied by a projective space (with an arbitrary action on it) is straightforward
using the K\"unneth formula:
\[
H^1(G, H^2(M \times \P^n)) = H^1(G, H^2(M) \oplus H^2(\P^n)) = H^1(G, H^2(M)).
\]

Let $M$, $M'$ be smooth projective varieties with $G$-action.
Assume that $M$ and $M'$ are $G$-birational. Then using $G$-equivariant Weak Factorization Theorem
\cite[Remark 2 for Theorem 0.2]{AKMW} $M'$ can be obtained from $M$
using a finite number of blow ups and blow downs with smooth $G$-invariant centers.

Hence it suffices to prove the statement for a blow up $M' = Bl_{Z} (M)$
where $Z$ is a smooth $G$-invariant subvariety of $M$. 
We have:
\[
H^2(M') = H^2(M) \oplus H^0(Z).
\]

Since $Z$ is smooth it is a disjoint union of its irreducible components.
We may restrict ourselves to considering $G$-orbits of components in which case
\[
H^0(Z) = \Hom(\Z[\pi_0(Z)], \Z) = \Hom(\Z[G/H], \Z) = \Hom_H(\Z[G], \Z)
\]
where $H \subset G$ is a stabilizer of an irreducible component. By Shapiro's Lemma \cite[6.3.2]{Wei} 
\[
H^1(G, H^0(Z)) = H^1(H, \Z) = 0.
\]
and finally 
\[
H^1(G, H^2(M')) = H^1(G, H^2(M)).
\]
\end{proof}

We call the $G$-action on $M$ stably linearizable
if 
$M \times \P^n$ is $G$-birational to $\P^N$ for some $G$-action on $\P^N$.

\begin{corollary}\cite[2.5.2]{BP}
If $G$-action on $M$ is stably linearizable then
\[
H^1(G, H^2(M)) = 0. 
\]
\end{corollary}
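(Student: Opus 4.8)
The plan is to deduce this directly from the preceding Lemma together with a short computation of $H^1(G, H^2(\P^N))$. By hypothesis the action is stably linearizable, so $M \times \P^n$ is $G$-birational to $\P^N$ for some action on $\P^N$. The Lemma was established to be insensitive both to multiplication by a projective space (carrying an arbitrary action) and to $G$-equivariant birational isomorphisms, so I would chain these two invariances to obtain
\[
H^1(G, H^2(M)) = H^1(G, H^2(M \times \P^n)) = H^1(G, H^2(\P^N)).
\]

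It then remains to show the right-hand side vanishes. Here I would use that $H^2(\P^N, \Z) = \Z$, generated by the hyperplane class, which is the ample generator of $\mathrm{Pic}(\P^N)$; since every automorphism of $\P^N$ preserves the ample generator $\OO(1)$, the induced $G$-action on $H^2(\P^N, \Z) \simeq \Z$ is trivial. Invoking the key fact already used in the proof of the Lemma, namely $H^1(G, A) = \Hom(G, A)$ for a trivial $G$-module $A$, I would conclude
\[
H^1(G, H^2(\P^N)) = H^1(G, \Z) = \Hom(G, \Z) = 0,
\]
the final equality holding because $G$ is finite (hence torsion) while $\Z$ is torsion-free.

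There is essentially no obstacle here: the only point deserving a word of justification is the triviality of the $G$-action on $H^2(\P^N, \Z)$, which is immediate from the fact that $\mathrm{Pic}(\P^N)$ is generated by $\OO(1)$ and automorphisms fix this class. Everything else is a purely formal consequence of the stable $G$-birational invariance established in the Lemma.
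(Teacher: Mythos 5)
Your argument is correct and is essentially the paper's own proof: reduce to $\P^N$ via the stable $G$-birational invariance of the preceding Lemma, observe that $H^2(\P^N)=\Z$ is a trivial $G$-module because the ample generator is preserved, and conclude with $H^1(G,\Z)=\Hom(G,\Z)=0$. No differences worth noting.
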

\begin{proof}
The obstruction group for any $G$-action on $\P^N$ vanishes:
\[
H^1(G, H^2(\P^N)) = 0
\]
since $H^2(\P^N) = \Z$ is a trivial $G$-module as the ample generator must be preserved by the $G$-action.
\end{proof}

\section{Equivariant cohomology}
\label{sec-equiv}

Let $M$ be a smooth variety with a finite group $G$ acting on it.
The equivariant cohomology groups have been defined by Borel \cite[Chapter IV]{B} as
\[
H^*_G(M, \Z) = H^*((M \times EG) / G, \Z). 
\]
Here $EG$ is a contractible space with a free action of $G$.
In modern terms equivariant cohomology groups of the topological realization
of the Deligne-Mumford stack $M/G$,
see \cite{Ieke} for an accessible introduction.

We note that 
\[
H^*_G(\pt, \Z) = H^*(BG, \Z) \simeq H^*(G, \Z) 
\]
where $BG = EG / G$ and 
the right most term is the group cohomology of $G$ with coefficients in a trivial module $\Z$.

\begin{example}\label{ex-HZp}
Let $G=\Z/m$ be a cyclic group. Then $\Z/m$ acts freely on the infinite-dimensional sphere $S^\infty$ multiplying
all the coordinates by the primitive $m$-th root of unity.
Thus $B(\Z/m)$ is an infinite lens space $L_\infty(m) = S^\infty / G$ and it is well-known that:
\[
H^j(BG, \Z) =  \left\{\begin{array}{ll}
\Z  , & j = 0\\
0  , & j \; odd\\
\Z/m  , & j \; even, \; j > 0\\
\end{array}\right.
\]
see \cite[Example 6.2.3]{Wei} for the algebraic approach or \cite[(18.6) and Exercise 18.8(a)]{BT} for
the topological argument.
\end{example}

\begin{remark}
If the $G$-action on $M$ is free, then 
\[
H^*_G(M, \Z) = H^*((M \times EG)/G, \Z) \simeq H^*(M/G, \Z). 
\]
This is because in this case the projection map 
$(M \times EG)/G \to M/G$ is a fibration with contractible fiber $EG$,
hence a homotopy equivalence.

In the other extreme case when the $G$-action on $M$ is trivial, then
\[
H^*_G(M, \Z) = H^*((M \times EG)/G, \Z) =  H^*(M \times BG, \Z).
\]
In particular if cohomology of $M$ are torsion free, then
\[
H^*_G(M, \Z) = H^*(M, \Z) \otimes H^*(BG, \Z) 
\]
by the K\"unneth formula.
\end{remark}

The following Theorem explains the relationship between $H^*_G(M, \Z)$ and
$H^*(M/G, \Z)$ in general. 
We do not assume $M$ to be compact and in this setting it is natural to consider 
equivariant cohomology groups with compact supports 
\[
H^j_{c,G}(M, \Z) := H^j_{c}((M \times EG) / G, \Z). 
\]

\begin{theorem}\label{thm-seq}
Let finite group $G$ act on a smooth variety $M$, and let $X = M/G$.
Assume that for every $P \in M$ the stabilizer of $P$ is trivial or coincides with $G$. Let $M^G$ denote the (possibly disconnected) fixed locus of $G$.
Then there is a long exact sequence of cohomology groups with compact supports:
\be{eq-seq-main}
\dots \to K^{j-1} \to H^j_c(X, \Z) \to H^j_{c,G}(M, \Z) \to K^j \to \dots
\ee
with $K^j = H^j_{c}(M^G \times BG,\Z) / H^j_c(M^G,\Z)$.
In particular, if $H^*_c(M^G,\Z)$ is torsion-free, then $K^* = H^*_c(M^G,\Z) \otimes \wt{H}^*(BG, \Z)$ by the K\"unneth formula.

The long exact sequence (\ref{eq-seq-main}) is covariantly functorial for equivariant open embeddings $V \subset M$.
\end{theorem}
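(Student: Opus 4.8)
The plan is to stratify $M$ by stabilizer type and to compare the quotient $X = M/G$ with the Borel construction $W := (M \times EG)/G$ one stratum at a time. By hypothesis every stabilizer is trivial or all of $G$, so $M = U \sqcup M^G$, where $U := M \setminus M^G$ is an open $G$-invariant subvariety on which $G$ acts freely and $M^G$ is closed with trivial action. Passing to the Borel construction, $W$ splits into the open piece $(U \times EG)/G$ and the closed piece $(M^G \times EG)/G = M^G \times BG$, and the projection $\pi \colon W \to X$ restricts to the fibre bundle $(U \times EG)/G \to U/G$ over the free part and to the projection $q \colon M^G \times BG \to M^G$ over the fixed part.

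First I would write the long exact sequence of the pair for compactly supported cohomology in both settings. For the closed subset $M^G \subset X$ with open complement $U/G$ this is
\[
\dots \to H^j_c(U/G) \to H^j_c(X) \to H^j_c(M^G) \to H^{j+1}_c(U/G) \to \dots,
\]
and for $M^G \times BG \subset W$ with open complement $(U\times EG)/G$ it is
\[
\dots \to H^j_{c,G}(U) \to H^j_{c,G}(M) \to H^j_c(M^G \times BG) \to H^{j+1}_{c,G}(U) \to \dots .
\]
The map $\pi$ induces a morphism from the first sequence to the second. To make $\pi^*$ available on compactly supported cohomology I would replace $EG$ by finite-dimensional approximations $E_N G = S^{2N+1}$ (with the free $\Z/m$-action), so that all fibres become compact — a sphere over the free part, a finite lens space $B_N G$ over the fixed part — and the projections become proper; the two sequences and the comparison map are then obtained at each level and one passes to the limit in each fixed degree.

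The heart of the argument is the identification of the two outer columns. Over the fixed part the action is trivial, so $H^j_{c,G}(M^G) = H^j_c(M^G \times BG)$ and the comparison map is $q^*$; using the mixed K\"unneth decomposition $H^*_c(M^G \times B_N G) = H^*_c(M^G) \otimes H^*(B_N G)$ (compact supports in the $M^G$ factor, ordinary cohomology in the compact factor $B_N G$) together with Example \ref{ex-HZp}, the map $q^* \colon \alpha \mapsto \alpha \otimes 1$ is split injective, the splitting being restriction to $M^G \times \{\pt\}$, with cokernel exactly $K^* = H^*_c(M^G \times BG)/H^*_c(M^G)$, which equals $H^*_c(M^G) \otimes \wt H^*(BG)$ once $H^*_c(M^G)$ is torsion-free. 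Over the free part I would show that $p^* \colon H^j_c(U/G) \to H^j_{c,G}(U)$ is an isomorphism: at finite level $(U \times S^{2N+1})/G \to U/G$ is an oriented $S^{2N+1}$-bundle, and its Gysin sequence forces $p_N^*$ to be an isomorphism in every degree $j \le 2N$; letting $N \to \infty$ gives the isomorphism in all degrees.

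With the free columns identified isomorphically and the fixed columns identified as the split injection $q^*$ with cokernel $K^*$, the sequence (\ref{eq-seq-main}) falls out of the mapping cone of the comparison map of long exact sequences (equivalently, a snake-lemma chase): the free-part contributions cancel because there $\pi^*$ is an isomorphism, and the surviving relative terms are the cokernels $K^*$. Finally, covariant functoriality for an equivariant open embedding $V \subset M$ is inherited from the covariance of $H^*_c$ for open embeddings (extension by zero), since the stratification, the Borel construction, the two pair sequences, and the map $\pi^*$ all restrict compatibly to $V$. The main obstacle I anticipate is precisely the compactly supported bookkeeping on the free stratum: because $H^*_c$ is not homotopy invariant and $EG$ is infinite dimensional, the clean statement $H^*_{c,G}(U) \cong H^*_c(U/G)$ cannot be read off from contractibility of the fibre and must instead be extracted from the finite-dimensional Gysin computation together with a stable-range limit.
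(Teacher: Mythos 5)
Your proposal is correct and follows essentially the same route as the paper: decompose $M$ into the free open part $U$ and the closed fixed part $M^G$, compare the two compactly supported pair sequences via pull-back, cancel the free columns (where the comparison map is an isomorphism) and split the injection $H^*_c(M^G) \to H^*_c(M^G \times BG)$ off the cone to leave $K^*$. The only difference is in the technical justifications: where the paper cites Borel for $H^*_c(U/G) \simeq H^*_{c,G}(U)$ and Iversen's proper pull-back (using that $BG$ has finitely many cells in each dimension), you use finite-dimensional sphere-bundle approximations and the Gysin sequence with a stable-range limit, which is a legitimate and arguably more self-contained way to handle the same compact-supports bookkeeping.
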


\begin{remark}
The assumption on the stabilizers of the group action in Theorem \ref{thm-seq} implies that the fixed locus $M^G$ is a disjoint union of smooth subvarieties of $M$.
Indeed $M^G$ coincides with the fixed locus $M^g$ of every nontrivial element $g \in G$, and the $M^g$'s are always smooth.
\end{remark}

\begin{remark}
The assumption on the stabilizers in Theorem \ref{thm-seq} are automatically satisfied if $G = \Z/p$ for a prime $p$.
\end{remark}

\begin{proof}[Proof of Theorem \ref{thm-seq}]
Let $U = M - M^G$, so that $G$ acts freely on $U$ and trivially on $M^G$ by assumption.
We compare two long exact sequences of compactly supported cohomology groups \cite[III.7.6, III.7.8]{Iv}:
\[\xymatrix{
\dots \ar[r] & H^{i-1}_{c,G}(M^G) \ar[r] & H^{i}_{c,G}(U) \ar[r] & H^i_{c,G}(M) \ar[r] & \dots \\
\dots \ar[r] &H^{i-1}_c(M^G) \ar[r] \ar[u]& H^{i}_c(U/G) \ar[r] \ar[u]_{\simeq} & H^i_c(M/G)  \ar[r] \ar[u] & \dots  \\
}\]

The vertical arrows in the diagram are given by pull-backs: pull-backs are well-defined
because classifying spaces $BG$ can be modeled by CW-complexes with finitely many cells in every dimension,
so the proper pull-back of \cite[III.7.8]{Iv} applies.

Since the $G$-action on $U$ is free, the middle vertical arrow is an isomorphism \cite[IV.3.4]{B}.
Consider the cone of the map of exact sequences above to get an exact sequence:
\[
\dots \to H^{i-1}_{c,G}(M^G) \oplus H^i_{c}(U/G) \to H^i_{c,G}(U) \oplus H^i_{c}(M/G) \to H^i_{c,G}(M) \oplus H^i_c(M^G) \to \dots
\]
Splitting off the terms $H^{i}_{c}(U/G)  \simeq H^{i}_{c,G}(U)$ we obtain an exact sequence
\[
\dots \to H^{i-1}_{c,G}(M^G) \to H^i_{c}(M/G) \to H^i_{c,G}(M) \oplus H^i_c(M^G) \to H^{i}_{c,G}(M^G) \to \dots
\]
Since $G$ acts trivially on the fixed locus $M^G$, the map 
\[
H^i_c(M^G) \to H^{i}_{c,G}(M^G) = H^i_{c}(M^G \times BG)
\]
is a split injection. This yields the desired long exact sequence (\ref{eq-seq-main}).

For functoriality of (\ref{eq-seq-main}) note that all the maps here come from the cohomology long exact sequence \cite[III.7.6]{Iv}.
When checking functoriality one uses that the latter cohomology exact sequence 
is covariantly functorial for proper pull-backs  (by \cite[III.7.8]{Iv}) and for open embeddings (by a slight generalization
of \cite[III.7.7]{Iv}).
\end{proof}

\begin{remark}
Note that the $K^j$'s are torsion abelian groups.
Thus if we consider cohomology with rational coefficients, we recover the usual
isomorphism $H^*_{c,G}(M, \Q) = H^*_c(M/G, \Q)$.
\end{remark}

\begin{example}\label{ex-V}
Let $M = V = \C^n$, and let $G = \Z/m$ action on $V$ be generated by
\[
(z_1, \dots, z_n) \mapsto (\zeta^{a_1} z_1, \dots, \zeta^{a_n} z_n)
\]
where $\zeta$ is the $m$-th root of unity.
Assume that all the weights $a_i \in \Z/m$ are non-zero.
In this case the fixed locus is $M^G = \{0\} \subset V$ so that 
\[
K^j =  \wt{H}^j(BG) = \left\{\begin{array}{ll}
\Z/m  , & j > 0 \; \text{even}\\
0  , & \text{otherwise} \\
\end{array}\right.
\]
Furthermore, since $V$ is equivariantly homemorphic to the cone over $S^{2n-1}$,
$X = V/G$ can be identified with a cone over the lens space $S^{2n-1} / G = L_{2n-1}(m)$.
Using the long cohomology sequence with compact supports we compute cohomology of the cone:
\[
H^*_{c}(X) = \wt{H}^{*-1}(L_{2n-1}(m)).
\]
The lens space $L_{2n-1}(m)$ is a compact $(2n-1)$-dimensional real manifold and its cohomology is:
\[
H^j(L_{2n-1}(m))  = \left\{\begin{array}{ll}
\Z/m  , & 2 \le j \le 2n-2 \; \text{even}\\
\Z,     & j = 0, 2n-1 \\
0  , & \text{otherwise} \\
\end{array}\right.
\]
see e.g. \cite[(18.6)]{BT}, so that 
\[
H^j_c(X)  = \left\{\begin{array}{ll}
\Z/m  , & 3 \le j \le 2n-1 \; \text{odd}\\
\Z,     & j = 2n \\
0  , & \text{otherwise} \\
\end{array}\right.
\]
Finally, we can compute $H^*_{c,G}(V)$ using the fact that $(V \times EG) / G$ is a complex vector bundle of rank $n$ over $BG$:
\[
H^j_{c,G}(V) = H^{j-2n}(BG) = 
\left\{\begin{array}{ll}
\Z,     & j = 2n \\
\Z/m  , & j > 2n \; even\\
0  , & \text{otherwise} \\
\end{array}\right.
\]

These computations are in accordance with the exact sequence of Theorem \ref{thm-seq}.
In particular we see that the boundary maps
\be{eq-delta}
\partial^{2i}: \Z/m = K^{2i} \to H^{2i+1}_c(V/G), \;\; 0 < 2i < 2n
\ee
are isomorphisms sending generators of
$\Z/m$ to the cohomology classes Poincare dual to the homology classes of the lens space space 
$S^{2n-1} / G \subset V/G$.
\end{example}

In the next section we make the boundary map in the long exact sequence of Theorem \ref{thm-seq} explicit
for isolated fixed points, thus generalizing (\ref{eq-delta}).

\section{Cohomology of cyclic quotient singularities}

Let $X$ be a $n$-dimensional variety, and let $P \in X$ be an isolated quotient singularity of order $m$.
Consider the lens space $L_{2n-1}(m)$ bounding a small contractible neighbourhood of $P$.
Recall that homology groups of the lens space are given by:
\[
H_k(L_{2n-1}(m))  = \left\{\begin{array}{ll}
\Z/m  , & 1 \le j \le 2n-3 \; \text{odd}\\
\Z,     & k = 0, 2n-1 \\
0  , & \text{otherwise} \\
\end{array}\right.
\]

The homology of $L_{2n-1}(m)$ may contribute to the odd degree cohomology of $X$.

\begin{definition}
For every $1 \le i \le n-1$ we define the map 
\[
\partial^{2i}_P: \Z/m \to H^{2i+1}_c(X, \Z)
\]
as a composition
\[
\Z/m = H_{2n-2i-1}(L_{2n-1}(m)) \to 
H_{2n-2i-1}(X^\circ) \simeq H^{2i+1}_c(X^\circ) = H^{2i+1}_c(X)
\]
of push-forward on homology, Poincare duality for nonsingular locus $X^\circ$ of $X$
and a natural isomorphism coming from long exact cohomology sequence with compact supports
(all coefficients in the cohomology groups $\Z$, as usual).
Using a slight abuse of notation we denote the postcomposition of $\partial^{2i}_P$ with the map 
\[
H^{2i+1}_c(X, \Z) \to H^{2i+1}(X, \Z) 
\]
by the same symbol $\partial^{2i}_P$:
\[
\partial^{2i}_P: \Z/m \to H^{2i+1}(X, \Z).
\]
\end{definition}

\begin{lemma}\label{lem-delta}
 Let $M$ be a smooth $n$-dimensional variety with $G = \Z/m$ action. Let $P \in M$ be an isolated fixed point of $G$.
 For every $1 \le i \le n-1$ let $\delta^{2i}_P$ be the restriction of the map $K^{2i} \to H^{2i+1}_c(M/G)$ of Theorem \ref{thm-seq}
 to its component $\Z/m = {H}^{2i}(BG, \Z) \subset K^{2i}$ corresponding
 to $P \in M^G$. Then the map $\delta^{2i}_P$ coincides with $\partial^{2i}_P$ defined above.
\end{lemma}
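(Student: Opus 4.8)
The plan is to localize the problem at $P$ and reduce the identity $\delta^{2i}_P = \partial^{2i}_P$ to the explicit computation already carried out in Example \ref{ex-V}, exploiting the functoriality of the long exact sequence (\ref{eq-seq-main}) for equivariant open embeddings.

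First I would choose a small $G$-invariant contractible neighbourhood $V$ of $P$. Since $P$ is an isolated fixed point, the equivariant linearization (Bochner slice theorem) identifies $V$, $G$-equivariantly, with a ball in the tangent space $T_P M \simeq \C^n$ on which $G = \Z/m$ acts linearly with weights $a_1, \dots, a_n$. These weights are all nonzero because $P$ is isolated, and the action is free on $V \setminus \{P\}$ because, by the stabilizer hypothesis inherited from Theorem \ref{thm-seq}, no point other than $P$ can have stabilizer equal to $G$. Thus $(V,P)$ is exactly the local model of Example \ref{ex-V}, and $V/G$ is a cone over the lens space $L = S^{2n-1}/G = L_{2n-1}(m)$.

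Next I would apply the functoriality of (\ref{eq-seq-main}) to the equivariant open embedding $V \hookrightarrow M$, obtaining a commutative ladder between the long exact sequences for $V$ and for $M$. On the $K$-terms the induced map $K^{2i}_V \to K^{2i}_M$ is the extension-by-zero push-forward in compactly supported cohomology of fixed loci; since $V^G = \{P\}$ is an isolated (open and closed) component of $M^G$, it carries $K^{2i}_V = H^{2i}(BG,\Z) = \Z/m$ isomorphically onto the summand of $K^{2i}_M$ indexed by $P$, matching generators. On the other side the induced map $H^{2i+1}_c(V/G) \to H^{2i+1}_c(M/G)$ is the push-forward $j_!$ for the open embedding $V/G \hookrightarrow X$. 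Commutativity of the ladder then gives
\[
\delta^{2i}_P \= j_! \circ \delta^{2i}_{V,P},
\]
where $\delta^{2i}_{V,P}$ denotes the corresponding boundary map for the local model. By Example \ref{ex-V}, $\delta^{2i}_{V,P}$ is the isomorphism $\Z/m = H_{2n-2i-1}(L) \simeq H^{2i+1}_c(V/G)$ sending the generator to the Poincar\'e dual of the fundamental class of $L \subset V/G$; this is precisely the local instance of the map $\partial^{2i}_P$ of the Definition.

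It then remains to check that $j_!$ intertwines the local and global versions of $\partial^{2i}_P$. The global $\partial^{2i}_P$ is built from the inclusion-induced homology push-forward $H_{2n-2i-1}(L) \to H_{2n-2i-1}(X^\circ)$, followed by Poincar\'e duality on the smooth locus $X^\circ$ and the isomorphism $H^{2i+1}_c(X^\circ) \simeq H^{2i+1}_c(X)$. Since $L \subset (V/G) \setminus \{P\} \subset X^\circ$, the homology push-forward factors through $(V/G)^\circ = (V/G) \setminus \{P\}$, and the standard compatibility of Poincar\'e duality with open inclusions of oriented manifolds (under which $u_!$ in compactly supported cohomology corresponds to $u_*$ in homology, with orientations fixed by the complex structure) shows that $j_!$ matches this push-forward after dualizing on $X^\circ$ and $(V/G)^\circ$. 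The isomorphisms $H^{2i+1}_c((V/G)^\circ) \simeq H^{2i+1}_c(V/G)$ and $H^{2i+1}_c(X^\circ) \simeq H^{2i+1}_c(X)$, both coming from excising the isolated singular point and harmless in degree $2i+1 \ge 3$, are used compatibly in the two definitions. Combining these identifications yields $\delta^{2i}_P = \partial^{2i}_P$. I expect the main obstacle to be exactly this last bookkeeping step: verifying that the abstractly defined connecting homomorphism $\delta$, arising from the cone of a comparison of compact-support long exact sequences, matches the geometric Poincar\'e-dual-of-the-link description $\partial$ through the chain of dualities and excision isomorphisms, and in particular that extension-by-zero on the cohomology side is dual to the inclusion-induced push-forward on the homology side with consistent orientations.
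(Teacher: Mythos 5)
Your proposal follows essentially the same route as the paper: restrict to a small $G$-equivariant neighbourhood $V\simeq\C^n$ of $P$, invoke the covariant functoriality of the long exact sequence of Theorem \ref{thm-seq} for the open embedding $V\subset M$ to reduce to the local model of Example \ref{ex-V}, and then match the boundary map with $\partial^{2i}_P$ there. The only difference is that you spell out the final bookkeeping (extension by zero versus homology push-forward under Poincar\'e duality, and the excision of the singular point), which the paper compresses into ``the claim follows from the definitions.''
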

\begin{proof}
Let $V$ be a small $G$-equivariant open neighbourhood of $P \in M$ homeomorphic to $\C^n$.
Since the exact sequence of Theorem \ref{thm-seq} is covariantly functorial with respect to open inclusions,
we get a commutative diagram
\[\xymatrix{
K^{2i} \ar[r] & H^{2i+1}_c(M/G) \\
\Z/m \ar[u] \ar[r] & H^{2i+1}_c(V/G) \ar[u] \\
}\]
Thus we are reduced to the case $M = V$ considered in Example \ref{ex-V}.
In this case the claim follows from the definitions.
\end{proof}

\begin{lemma}\label{lem-delta-surj}
Let $X$ be a projective $n$-dimensional variety
with $H_1(X, \Z) = 0$ 
with isolated cyclic quotient singularities $P_1, \dots, P_r \in X$ of orders $m_1, \dots, m_r$
and no other singularities.
Then the map
\[
\partial = \sum \partial^{2n-2}_{P_i}: \bigoplus_{i=1}^r \Z/m_i \to H^{2n-1}(X,\Z)
\]
is surjective.
\end{lemma}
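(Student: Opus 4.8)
The plan is to reduce the statement to a single topological surjectivity statement about the smooth locus $X^\circ = X \setminus \{P_1, \dots, P_r\}$, and then to extract that surjectivity from the homology long exact sequence of the pair $(X, X^\circ)$, where the hypothesis $H_1(X, \Z) = 0$ enters.

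First I would unwind the definition of $\partial^{2n-2}_{P_i}$ in the top index $i = n-1$. Then $2n - 2i - 1 = 1$ and $2i+1 = 2n-1$, so by definition $\partial^{2n-2}_{P_i}$ is the composite
\[
\Z/m_i = H_1(L_{2n-1}(m_i)) \longrightarrow H_1(X^\circ) \overset{\mathrm{PD}}{\simeq} H^{2n-1}_c(X^\circ) \simeq H^{2n-1}_c(X) = H^{2n-1}(X),
\]
where the first arrow is induced by the inclusion of the link, the second is Poincar\'e duality on the smooth $2n$-real-dimensional manifold $X^\circ$, the third is the isomorphism coming from the compactly supported long exact sequence of the finite set $\{P_i\}\subset X$ (an isomorphism since a finite set has no compactly supported cohomology in degrees $2n-1 \geq 2$), and the final equality holds because $X$ is projective, hence compact. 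As the last three arrows are isomorphisms, the map $\partial = \sum_i \partial^{2n-2}_{P_i}$ is surjective if and only if the sum of link inclusions
\[
\bigoplus_{i=1}^r H_1(L_{2n-1}(m_i)) \longrightarrow H_1(X^\circ)
\]
is surjective.

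To prove this surjectivity I would use the homology long exact sequence of the pair $(X, X^\circ)$. Choosing small cone neighbourhoods of the $P_i$, excision identifies $H_2(X, X^\circ)$ with $\bigoplus_i \wt{H}_1(L_{2n-1}(m_i)) = \bigoplus_i \Z/m_i$, and under this identification the connecting homomorphism agrees, up to sign, with the sum of link inclusions above. The relevant segment
\[
H_2(X, X^\circ) \longrightarrow H_1(X^\circ) \longrightarrow H_1(X)
\]
together with $H_1(X,\Z) = 0$ then forces the connecting map, and hence the sum of link inclusions, to be surjective; this is exactly what was required. In passing this shows that $H^{2n-1}(X,\Z) \simeq H_1(X^\circ)$ is a finite group, being a quotient of $\bigoplus_i \Z/m_i$.

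The main obstacle is not any of the exact sequences, which are routine, but the compatibility bookkeeping: one must verify that the link-inclusion map built into the definition of $\partial^{2n-2}_{P_i}$ coincides, up to an irrelevant sign, with the excision-plus-connecting-map description, and that the three isomorphisms in the first display are mutually compatible and natural. Once these identifications are pinned down, the vanishing $H_1(X,\Z)=0$ does all the remaining work.
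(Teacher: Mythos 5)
Your argument is correct, and while it shares the paper's basic strategy of localizing the problem at the links $L_{2n-1}(m_i)$ and comparing $X$ with $X^\circ$, the execution is genuinely different. The paper works in cohomology: it uses the cofiber sequence $H^{2n-1}(X/X^\circ)\to H^{2n-1}(X)\to H^{2n-1}(X^\circ)$, identifies the first group with $\bigoplus_i \Z/m_i$ via a wedge of suspended lens spaces, and then shows the second map vanishes because its source is torsion (since $X$ is a $\Q$-homology manifold with $H_1(X,\Z)=0$) while its target $H^{2n-1}(X^\circ,\Z)\simeq H_1^{BM}(X^\circ,\Z)$ is torsion-free (via the Borel--Moore long exact sequence). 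You instead invert the three isomorphisms built into the definition of $\partial^{2n-2}_{P_i}$ to reduce the claim to surjectivity of $\bigoplus_i H_1(L_{2n-1}(m_i))\to H_1(X^\circ)$, which drops out of the homology long exact sequence of the pair $(X,X^\circ)$ together with $H_1(X,\Z)=0$. Your route is more economical --- it needs neither Borel--Moore homology, nor the $\Q$-homology-manifold input, nor the torsion versus torsion-free dichotomy --- at the cost of the excision and connecting-map bookkeeping you rightly flag; that bookkeeping is standard (naturality of the long exact sequence under the map of pairs $(U_i, U_i\setminus\{P_i\})\to (X,X^\circ)$ for cone neighbourhoods $U_i$). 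The only small point worth writing out is that the vanishing $H^{2n-2}_c(\{P_1,\dots,P_r\},\Z)=H^{2n-1}_c(\{P_1,\dots,P_r\},\Z)=0$, which makes $H^{2n-1}_c(X^\circ)\to H^{2n-1}_c(X)$ an isomorphism, requires $n\ge 2$; this is harmless since for $n=1$ the statement is vacuous.
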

\begin{proof}
We consider the cohomology exact sequence for the pair $X^\circ \subset X$:
\[
H^{2n-1}(X^\circ / X,\Z) \overset{\partial'}{\to} H^{2n-1}(X,\Z) \to H^{2n-1}(X^\circ, \Z).
\]
The Thom space $X^\circ / X$ is homotopy equivalent to a wedge of suspensions of the
lens spaces $L_{2n-1}(m_i)$, so that
\[
H^{2n-1}(X^\circ / X, \Z) = \bigoplus_{i=1}^r \Z/m_i
\]
and under this identification the map $\partial'$ corresponds to $\partial$.

Since $X$ is a $\Q$-homology manifold, $H_1(X,\Z) = 0$ implies that $H^{2n-1}(X,\Z)$
is torsion. Thus to show that $\partial$ is surjective 
it suffices to check that $H^{2n-1}(X^\circ, \Z)$ is torsion-free.

We use Poincare duality isomorphism of cohomology and Borel-Moore homology \cite[Theorem IX.4.7]{Iv}:
\[
H^{2n-1}(X^\circ, \Z) = H_1^{BM}(X^\circ, \Z)
\]
and the long exact Borel-Moore homology sequence \cite[IX.2.1]{Iv}:
\[
0 = H_1(X, \Z) \to H_1^{BM}(X^\circ, \Z) \to H_0(\{P_1, \dots, P_r\}, \Z) = \Z^r
\]
which implies that the group in the middle is torsion-free.
\end{proof}

\section{The isomorphism $H^1(G, H^2(M, \Z)) \simeq H^3_G(M, \Z)$}

\begin{theorem}\label{thm-equiv}
Let $M$ be a smooth projective variety with a nontrivial $G=\Z/m$ action.
Assume that $H^1(M, \Z) = 0$, $H^3(M, \Z)^G = 0$ 
and that the action has at least one fixed point.
Then we have a natural isomorphism
\[
H^1(G, H^2(M, \Z)) \simeq H^3_G(M, \Z).
\]
\end{theorem}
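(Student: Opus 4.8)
The plan is to run the Hochschild-Serre spectral sequence of the fibration $M \hookrightarrow (M \times EG)/G \to BG$, whose $E_2$-page is
\[
E_2^{p,q} = H^p(G, H^q(M, \Z)) \;\Longrightarrow\; H^{p+q}_G(M, \Z),
\]
and to read off $H^3_G(M,\Z)$ from the four entries of total degree $3$, namely $E_2^{3,0}$, $E_2^{2,1}$, $E_2^{1,2}$ and $E_2^{0,3}$. Three of these vanish for formal reasons: $E_2^{3,0} = H^3(G, \Z) = 0$ because $G = \Z/m$ is cyclic and its cohomology with trivial coefficients vanishes in positive odd degrees (Example \ref{ex-HZp}); $E_2^{2,1} = H^2(G, H^1(M, \Z)) = 0$ since $H^1(M,\Z) = 0$; and $E_2^{0,3} = H^3(M, \Z)^G = 0$ by hypothesis. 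The one surviving entry is exactly $E_2^{1,2} = H^1(G, H^2(M, \Z))$, so everything reduces to showing that $E_2^{1,2}$ survives intact to $E_\infty$.

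First I would locate the differentials touching $E_2^{1,2}$. The only $d_2$ leaving it lands in $E_2^{3,1} = H^3(G, H^1(M,\Z)) = 0$, and no nonzero term maps into it for bidegree reasons, so $E_3^{1,2} = E_2^{1,2}$. The single potentially nonzero differential is
\[
d_3 : E_3^{1,2} \longrightarrow E_3^{4,0} = H^4(G, \Z) = \Z/m,
\]
and handling this $d_3$ is the crux of the whole argument; all higher differentials in and out of $(1,2)$ vanish by bidegree.

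The hard part is thus to kill this $d_3$, and here the hypothesis that the action has a fixed point $P \in M^G$ is essential. The $G$-equivariant inclusion of $P$ induces a section $s : BG = (\{P\} \times EG)/G \to (M \times EG)/G$ of the bundle projection $\pi$, so that $\pi \circ s = \mathrm{id}_{BG}$, whence $\pi^* : H^*(BG, \Z) \to H^*_G(M, \Z)$ is split injective. Since $\pi^*$ is precisely the edge homomorphism $E_2^{p,0} \twoheadrightarrow E_\infty^{p,0} \hookrightarrow H^p_G(M,\Z)$, its injectivity forces $E_2^{p,0} \xrightarrow{\sim} E_\infty^{p,0}$ for every $p$; equivalently, every differential entering the bottom row must vanish. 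As $E_2^{4,0} = E_3^{4,0}$ and the only differential that could reduce $E_3^{4,0}$ is this $d_3$ (the other candidate $d_4$ emanates from $E_4^{0,3}$, a subquotient of $E_2^{0,3}=0$), we conclude $d_3 = 0$.

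With the differential killed, $E_\infty^{1,2} = E_2^{1,2} = H^1(G, H^2(M,\Z))$, while the graded pieces $E_\infty^{3,0}$, $E_\infty^{2,1}$, $E_\infty^{0,3}$ of total degree $3$ are subquotients of the vanishing $E_2$-terms and hence zero. The filtration on $H^3_G(M,\Z)$ then collapses onto the single piece $E_\infty^{1,2}$, yielding the isomorphism $H^3_G(M,\Z) \simeq H^1(G, H^2(M,\Z))$. Because the spectral sequence, its edge maps and its filtration are all natural in $M$ for equivariant morphisms, the resulting isomorphism is natural, as claimed.
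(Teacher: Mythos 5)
Your proposal is correct and follows essentially the same route as the paper: the Hochschild--Serre spectral sequence of $(M\times EG)/G \to BG$, vanishing of $E_2^{3,0}$, $E_2^{2,1}$, $E_2^{0,3}$ for the same three reasons, and killing the single differential $d_3\colon E_3^{1,2}\to E_3^{4,0}=\Z/m$ by using the fixed point to produce a section of the bundle projection, which makes the edge map $H^*(BG,\Z)\to H^*_G(M,\Z)$ split injective. The paper phrases this last step as $\mathrm{Im}(d_3)=\mathrm{Ker}(\phi)$ with $\phi$ the edge morphism split by the fixed point, which is exactly your argument.
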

\begin{proof}
We use the Hochschild-Serre spectral sequence 
\[
E_2^{p,q} = H^p(G, H^q(M, \Z)) \implies H^{p+q}_G(M,\Z)
\]
which can be constructed as the Leray-Serre spectral sequence of the
fiber bundle $(M \times EG)/G \to BG$ with fiber $M$.

All the groups on the $p+q=3$ diagonal of second term of 
vanish except for $E_2^{1,2} = H^1(G, H^2(M, \Z))$ 
(see Example \ref{ex-HZp} for group cohomology of $\Z/m$ with constant coefficients).

The only non-trivial differential touching the latter group is
\[
d_3: H^1(G, H^2(M, \Z)) \to H^4(G, H^0(M, \Z)) 
\]
with the spectral sequence looking as follows:

\[
\xymatrix{
_3 \ar@{.}[r] & 0 \ar@{.}[d] \\
_2 \ar@{--}[u] \ar@{.}[r] & 0 \ar@{.}[r] \ar@{.}[d] & E_2^{1,2} \ar@{->}[ddrrr]^{d_3} \ar@{.}[d] \\
_1 \ar@{--}[u] \ar@{.}[r] \ar@{.}[d] & 0 \ar@{.}[d] \ar@{.}[r] & 0 \ar@{.}[r] \ar@{.}[d] & 0 \ar@{.}[r] \ar@{.}[d] & 0 \ar@{.}[d] \\
_0 \ar@{--}[u] \ar@{.}[r] & \Z \ar@{.}[r] & 0 \ar@{.}[r] & \Z/m \ar@{.}[r] & 0 \ar@{.}[r] & E_2^{4,0} \\
^q/_p \ar@{--}[u] \ar@{--}[r] & _0 \ar@{.}[u] \ar@{--}[r] & _1 \ar@{.}[u] \ar@{--}[r] & _2 \ar@{.}[u] \ar@{--}[r] & _3 \ar@{--}[r] \ar@{.}[u] & _4 \ar@{.}[u]
}
\]

We now show that the differential $d_3$ is zero. 
Indeed since it is also the only non-trivial differential touching the target group
\[
E_2^{4,0} = H^4(G, H^0(M, \Z)) = H^4(G, \Z) = \Z/m
\]
we have
\[
Im(d_3) = Ker(\phi: \Z/m \to H^4_G(M, \Z)),
\]
where $\phi$ is the boundary morphism $E_2^{4,0} \to E_\infty^{4,0}$.

The boundary morphism $\phi$ is the same thing
as the pull-back morphism $H^4_G(pt, \Z) \to H^4_G(M, \Z)$.
The existence of a fixed point of $G$ gives a splitting of $\phi$, so that $\phi$ is
injective, and 
\[
Im(d_3) = Ker(\phi) = 0.
\]

It follows now from the considerations above that the boundary map
\[
E^{1,2}_2 = H^1(G, H^2(M, \Z)) \to H^3_G(M, \Z) = E^{1,2}_\infty
\]
is an isomorphism.

\end{proof}

\section{Cohomology of surfaces}
\label{sec-surf}

\begin{theorem}\label{thm-H3}
Let $M$ be a smooth projective surface with $H_1(M,\Z) = 0$. Let $M$ have a $G = \Z/m$ action such that for every $P \in M$ 
the stabilizer $\Stab(P)$ is trivial or is equal to $G$ and that the action has a fixed point.
There is a canonical isomorphism
\[
H^3_G(M, \Z) \simeq \bigoplus_{D \subset M^G} H^1(D, \Z) \otimes \Z/m
\]
where is the sum is taken over curves in the fixed locus $M^G$, i.e. over all point-wise fixed curves in $M$. 
\end{theorem}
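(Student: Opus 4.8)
The plan is to compute $H^3_G(M,\Z)$ by feeding the geometry of the surface into the long exact sequence of Theorem~\ref{thm-seq}, and then using Theorem~\ref{thm-equiv} only implicitly as motivation for why degree~$3$ is the relevant one. The starting point is the observation that, under the stabilizer hypothesis, the fixed locus $M^G$ is a disjoint union of smooth subvarieties: finitely many isolated fixed points $P_1,\dots,P_r$ together with the point-wise fixed curves $D \subset M^G$. I would organize the target group according to this decomposition. Since $M$ is projective I expect to pass between ordinary and compactly supported cohomology freely (they agree for compact $M$), so I would apply Theorem~\ref{thm-seq} in the form
\[
\dots \to K^{2} \to H^3(X,\Z) \to H^3_G(M,\Z) \to K^3 \to \dots
\]
where $X = M/G$ and $K^j = H^j(M^G \times BG,\Z)/H^j(M^G,\Z)$.

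First I would identify the $K$-terms explicitly. Because $M^G$ has torsion-free cohomology (it is a disjoint union of smooth points and curves, whose integral cohomology is torsion-free when $H_1=0$ on each component, and in any case the curves $D$ are smooth projective so $H^*(D,\Z)$ is free), the K\"unneth clause of Theorem~\ref{thm-seq} gives $K^* = H^*(M^G,\Z)\otimes \wt H^*(BG,\Z)$. Writing out $\wt H^*(BG,\Z)$ from Example~\ref{ex-HZp} (namely $\Z/m$ in each positive even degree and $0$ otherwise), I get
\[
K^3 = \bigl(H^1(M^G,\Z)\otimes \wt H^2(BG,\Z)\bigr) \oplus \bigl(H^{-1}(M^G,\Z)\otimes\cdots\bigr) = \bigoplus_{D\subset M^G} H^1(D,\Z)\otimes \Z/m,
\]
since only the degree-$1$ part of $H^*(M^G)$ pairs with the degree-$2$ generator of $\wt H^*(BG)$ to land in total degree $3$, and only the curve components contribute $H^1$. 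Similarly $K^2 = H^0(M^G,\Z)\otimes\wt H^2(BG,\Z) = \bigoplus_{\text{components}}\Z/m$, one summand for each connected component of $M^G$ (both isolated points and curves contribute here).

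The crux is then to show that the map $H^3_G(M,\Z) \to K^3$ is an isomorphism, i.e. that it is surjective and that the incoming map $K^2 \to H^3(X,\Z)$ is injective with image complementary to anything spurious. I would argue that the connecting map $K^2 \to H^3(X,\Z)$ is \emph{injective}: its components are precisely the maps $\partial^{2}_{P}$ (for the point $n=2$ case) of Lemma~\ref{lem-delta}, which by Lemma~\ref{lem-delta-surj} assemble into a surjection onto the torsion group $H^3(X,\Z)$; the content I need is that over the isolated points this surjection is in fact a bijection, while the curve components of $K^2$ map to zero in $H^3(X,\Z)$ because the quotient singularities there are not isolated (the fixed curve passes through them, so the local lens-space boundary class dies). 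The main obstacle is exactly this local analysis at the non-isolated fixed points along the curves $D$, distinguishing which summands of $K^2$ survive into $H^3(X)$ and controlling the cokernel so that $H^3_G(M,\Z)\to K^3$ comes out an isomorphism rather than merely a surjection. Once the relevant portion of the sequence is shown to read $0 \to K^2 \to H^3(X,\Z) \to H^3_G(M,\Z) \to K^3 \to 0$ with the first map an isomorphism onto its image, exactness forces $H^3_G(M,\Z)\simeq K^3 = \bigoplus_{D\subset M^G} H^1(D,\Z)\otimes\Z/m$, giving the canonical isomorphism claimed.
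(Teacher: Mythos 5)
Your setup coincides with the paper's: the same long exact sequence from Theorem \ref{thm-seq}, the same identification $K^2 = (\Z/m)^{\pi_0(M^G)}$ and $K^3 = \bigoplus_{D} H^1(D,\Z)\otimes\Z/m$, and you even cite the two lemmas that finish the job. But you then ask the wrong question of the connecting map $\delta^2\colon K^2 \to H^3(X,\Z)$. What makes $H^3_G(M,\Z)\to K^3$ injective is \emph{surjectivity} of $\delta^2$ (which forces $H^3(X,\Z)\to H^3_G(M,\Z)$ to be the zero map by exactness), not injectivity. Your closing inference is a non sequitur: from an exact sequence $K^2\to H^3(X,\Z)\to H^3_G(M,\Z)\to K^3\to 0$ in which $K^2\to H^3(X,\Z)$ is merely injective, $H^3_G(M,\Z)$ is an extension of $K^3$ by $\mathrm{coker}(\delta^2)$, which need not vanish. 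Worse, the injectivity claim is internally inconsistent with your own remark that the curve components of $K^2$ map to zero: each fixed curve $D$ contributes a $\Z/m$ summand $H^0(D,\Z)\otimes H^2(BG,\Z)$ to $K^2$, so if those summands die under $\delta^2$, the map is not injective whenever $M^G$ contains a curve.

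The repair is that you already hold the needed ingredient and mislabel it as insufficient. Lemma \ref{lem-delta} identifies the components of $\delta^2$ at the isolated fixed points with the maps $\partial^2_{P_i}$, and Lemma \ref{lem-delta-surj} says that $\sum_i\partial^2_{P_i}$ is already surjective onto $H^3(X,\Z)$; hence $\delta^2$ is surjective regardless of what its curve components do. (To invoke Lemma \ref{lem-delta-surj} one must first check $H_1(X,\Z)=0$; this follows from $H_1(M,\Z)=0$ together with the existence of a fixed point via Lemma \ref{lem-H1}, a step you omit.) The ``main obstacle'' you single out --- the local analysis at the non-isolated fixed points lying on the curves $D$, and upgrading the surjection over the isolated points to a bijection --- is entirely unnecessary; no injectivity statement about $\delta^2$ enters the argument. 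Finally, you assert but do not prove surjectivity of $H^3_G(M,\Z)\to K^3$: this holds because the next map $K^3\to H^4(X,\Z)$ is zero, $K^3$ being torsion while $H^4(X,\Z)$ is free abelian on the components of $X$.
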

\begin{proof}
We use the long exact sequence of Theorem \ref{thm-seq}:
\[
K^2 \to H^3(M/G, \Z) \to H^3_G(M, \Z) \to K^3 \to H^4(M/G,\Z). 
\]
The group $H^4(M/G,\Z)$ is a free abelian group generated by connected components of $M/G$.
Since $K^3$ is a torsion group the right most map in the exact sequence is zero.

Since the fixed locus $M^G$ of the action is a disjoint union of points and curves the cohomology of $M^G$ is torsion free
and groups $K^2$, $K^3$ are computed as
\[\bal
K^2 &= H^0(M^G, \Z) \otimes H^2(BG, \Z) = (\Z/m)^{\pi_0(Z)} \\
K^3 &= H^1(M^G, \Z) \otimes H^2(BG, \Z) = \bigoplus_{D \subset M^G} H^1(D, \Z) \otimes \Z/m.
\eal\]
Thus the long exact sequence above rewrites as
\[
(\Z/m)^{\pi_0(M^G)} \to H^3(M/G, \Z) \to H^3_G(M, \Z) \to \bigoplus_{D \subset M^G} H^1(D,\Z) \otimes \Z/m \to 0.  
\]
and we need to check that the first map on the left is surjective.

Since the $G$-action on $M$ has a fixed point $P$, it follows from Lemma \ref{lem-H1}
that $H_1(X, \Z) = 0$.
Let $P_1, \dots, P_r \in M$ denote the isolated fixed points of the $G$-action.
Lemma \ref{lem-delta-surj} tells that the boundary map
\[
\sum_{i=1}^r \partial^{2}_{P_i}: (\Z/m)^r \to H^3(M/G, \Z) 
\]
is surjective, hence by Lemma \ref{lem-delta} so is the map
\[
\delta^2: (\Z/m)^{\pi_0(M^G)} \to H^3(M/G, \Z).
\]
\end{proof}

\begin{proof}[Proof of Theorem \ref{main-thm}]
Using Poincare duality we can write:
\[\bal
H^0(M, \Z) &\simeq \Z \\
H^1(M, \Z) &\simeq \Z^{b_1} \\
H^2(M, \Z) &\simeq \Z^{b_2} \oplus T\\
H^3(M, \Z) &\simeq \Z^{b_1} \oplus T\\
H^4(M, \Z) &\simeq \Z
\eal\]
where $T \simeq H_1(M, \Z)_{tors}$ and $b_i = \rk \, H_i(M, \Z)$,
thus
\[
H_1(M, \Z) = 0 \implies H^1(M, \Z) = H^3(M, \Z) = 0. 
\]

All conditions of Theorem \ref{thm-equiv} are satisfied and we have
\[
H^1(G, H^2(M, \Z)) \simeq H^3_G(M, \Z).
\]

We finish the proof using Theorem \ref{thm-H3}.
\end{proof}

\begin{lemma}\label{lem-H1}
Let $G$ act on a variety $M$ and assume that $G$ has a fixed point. Let $X = M/G$. Then the maps
\[
\pi_1(M) \to \pi_1(X) 
\]
\[
H_1(M, \Z) \to H_1(X, \Z) 
\]
are surjective.
\end{lemma}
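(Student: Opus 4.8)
The plan is to prove surjectivity of the map on fundamental groups; the statement for $H_1$ then follows at once by abelianization, since $H_1(\,\cdot\,,\Z)$ is the abelianization of $\pi_1$ (Hurewicz) and a surjection of groups induces a surjection of abelianizations. We may assume the action is faithful, as the quotient $X$ and the two maps in question are unchanged upon replacing $G$ by the image of its action, and we take $M$ (hence $X$) connected. Write $q\colon M \to X$ for the quotient map and let $R \subset M$ be the locus of points with nontrivial stabilizer. Since the action is faithful, $R = \bigcup_{g \neq e} M^g$ is a finite union of proper closed subvarieties, hence a proper closed subvariety; as such it has complex codimension $\geq 1$, i.e. real codimension $\geq 2$. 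Set $U = M \setminus R$ and $V = X \setminus q(R)$. Then $G$ acts freely on $U$, so $U \to V$ is a regular covering with deck group $G$, giving a short exact sequence
\[
1 \to \pi_1(U) \to \pi_1(V) \to G \to 1 .
\]

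I would then invoke two standard facts. First, removing a closed subset of real codimension $\geq 2$ from a complex variety yields a surjection on $\pi_1$ (loops and homotopies, being images of $1$- and $2$-complexes, can be pushed off such a set by transversality); in particular $\pi_1(V) \to \pi_1(X)$ is surjective and the image of $\pi_1(U) \to \pi_1(M)$ maps onto its image in $\pi_1(X)$. Second, from the exact sequence above, $\pi_1(V)$ is generated by $\pi_1(U)$ together with one lift $\gamma_g \in \pi_1(V)$ of each $g \in G$. The square of inclusions commutes, so the composite $\pi_1(U) \to \pi_1(M) \to \pi_1(X)$ factors through $\pi_1(V)$; hence the image of $\pi_1(M) \to \pi_1(X)$ already contains the image of $\pi_1(U)$. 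Combining, to conclude that $\pi_1(M) \to \pi_1(X)$ is onto it suffices to show that each $\gamma_g$ maps to zero in $\pi_1(X)$.

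The key step, and the \textbf{main obstacle}, is producing these null-homotopic lifts using the fixed point $P$. Choose a $G$-invariant ball $B \cong \C^n$ about $P$ on which $G$ acts linearly, fix a basepoint $u \in B \setminus R \subset U$, and base all groups at $u$ or at $q(u)$. Let $\sigma$ be a radial path in $B$ from $u$ to $P$, and pick a path $\tau'$ from $u$ to $gu$ inside $B \setminus R$; its image $\gamma_g = q(\tau')$ is a loop in $V$ whose monodromy is $g$. In the simply connected ball $B$ the path $\tau'$ is homotopic rel endpoints to the concatenation $\tau = \sigma * \overline{g\sigma}$ running from $u$ to $P$ and then, since $P = gP$, back from $P$ to $gu$. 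Because $q \circ g = q$, the two halves of $q(\tau)$ are the same path $q(\sigma)$ traversed in opposite directions, so $q(\tau)$ is null-homotopic in $X$, and therefore so is $\gamma_g = q(\tau')$.

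It then follows that every $\gamma_g$ dies in $\pi_1(X)$. Since $\pi_1(V)$ is generated by $\pi_1(U)$ and the $\gamma_g$, and $\pi_1(V) \to \pi_1(X)$ is surjective, the image of $\pi_1(X)$ is generated by the images of these generators, all of which lie in the image of $\pi_1(M) \to \pi_1(X)$. Hence $\pi_1(M) \to \pi_1(X)$ is surjective, and passing to abelianizations gives the surjectivity of $H_1(M,\Z) \to H_1(X,\Z)$. I expect the only delicate points to be the local linearization at the fixed point (used to make $\tau'$ and $\tau$ homotopic) and the two codimension$\,\geq 2$ surjectivity facts, which are standard for complex varieties.
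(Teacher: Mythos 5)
Your argument is correct, but it takes a genuinely different and considerably heavier route than the paper. The paper's proof is two lines: by Armstrong's path-lifting lemma the quotient map $q\colon M\to X$ lifts paths, and since the fibre of $q$ over $q(P)$ is the single point $P$ (because $P$ is fixed by all of $G$), every loop at $q(P)$ lifts to a \emph{loop} at $P$, giving surjectivity on $\pi_1$ directly; $H_1$ follows by abelianization exactly as you say. Your proof instead passes to the free locus $U\to V$, uses the deck-group exact sequence $1\to\pi_1(U)\to\pi_1(V)\to G\to 1$, and uses the fixed point only to show that the coset representatives $\gamma_g$ die in $\pi_1(X)$ via the concatenation $\sigma*\overline{g\sigma}$ through $P$ --- that step is a nice observation and is sound (the homotopy $\tau'\simeq\tau$ lives in the simply connected ball $B$ and is only needed after mapping to $X$, so it may cross $R$). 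What your approach costs is the auxiliary fact that $\pi_1(V)\to\pi_1(X)$ is surjective: your transversality justification is a manifold argument, but $X=M/G$ is singular precisely along a subset of $q(R)$, so pushing loops and homotopies off $q(R)$ near those points is not literal general position. The conclusion you need is still true --- for an irreducible \emph{normal} complex variety the complement of any proper Zariski-closed subset surjects on $\pi_1$, and $M/G$ is normal --- but it should be invoked as that standard fact (or proved locally using the cone structure of quotient singularities) rather than by transversality. Note also that your argument is tied to the complex-analytic codimension count, whereas the paper's Armstrong-based proof is purely topological; on the other hand your proof makes the role of the fixed point more transparent, namely it trivializes the monodromy generators.
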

\begin{proof}
Let $P \in M$ be a fixed point.
By \cite[Lemma 1]{Arm} the quotient map $p: M \to M/G$ admits path lifting property.
Thus every loop in $X$ based in $p(P)$ lifts to a loop in $M$ based in $P$, and
the map of fundamental groups is surjective.
It follows that the map on first homology groups is surjective as well.
\end{proof}

\section{$G$-equivariant Chern classes}\label{sec-chern}

For a $G$-equivariant vector bundle $E$ on a variety $M$ with a $G$-action one may
define equivariant Chern classes $c_i^G(E) \in H^{2i}_G(M, \Z)$ by assigning to $G$-equivariant
bundles vector bundles on $(M \times EG)/G$, that is via the composition
\[
c_i^G: K_0^G(M) \to K_0((M \times EG)/G) \overset{c_i}{\to} H^{2i}_G(M, \Z).
\]

As a particular case every representation $V$ of $G$ determines a $G$-equivariant
vector bundle on a point, so in particular we have Chern classes $c_i(V) \in H^{2i}(BG, \Z)$ studied in detail in \cite{At}.

\begin{example}\label{ex-BG-mult}
Let $G = \Z/m$, and let $\rho$ be the character $\ol{k} \mapsto e^{{2\pi i \cdot k}/m}$.
Let $x = c_1(\rho) \in H^2(BG, \Z) = \Z/m$. Then $c_1(\rho^{\otimes j}) = jx$.
In fact there is an isomorphism of rings \cite[\S8]{At}
\[
H^*(BG, \Z) = \Z[x] / mx. 
\]
\end{example}

The equivariant Chern classes satisfy all the usual properties of Chern classes
such as functoriality, formulas for direct sums and tensor products etc.

We need some results on equivariant Chern classes in the case of trivial $G$-action.

\begin{lemma}\label{lem-cn}
Let $Z$ be a variety endowed with trivial $G$-action, let $E$ be a vector bundle on $Z$ of rank $n$, and
let $\rho: G \to \C^*$ be a character of $G$. Then the top Chern class of the equivariant 
$G$-bundle $E \otimes \rho$ is given by
\[
c_n^G(E \otimes \rho) = \sum_{i=0}^n c_{i}(E) c_1(\rho)^{n-i} \in H^{2n}_G(Z, \Z).
\]
\end{lemma}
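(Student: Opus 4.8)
The plan is to reduce the statement to the splitting principle and the structure of $H^*(BG,\Z)$ computed in Example \ref{ex-BG-mult}. Since $G$ acts trivially on $Z$, the Borel construction $(Z \times EG)/G$ is simply $Z \times BG$, and an equivariant bundle of the form $E \otimes \rho$ becomes the external tensor product $E \boxtimes L$ on $Z \times BG$, where $L$ is the line bundle on $BG$ with $c_1(L) = c_1(\rho) = x$. Thus the whole computation takes place in the cohomology ring $H^*(Z \times BG, \Z)$, and the top Chern class we want is the ordinary Chern class $c_n(E \boxtimes L)$.

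First I would recall the general formula for the Chern classes of a tensor product of a rank $n$ bundle $E$ with a line bundle $L$. Writing the Chern roots of $E$ as $\alpha_1, \dots, \alpha_n$ and setting $t = c_1(L)$, the splitting principle gives
\[
c_n(E \otimes L) = \prod_{i=1}^n (\alpha_i + t).
\]
Expanding this product and collecting terms by the number of factors that contribute a $t$, the coefficient of $t^{n-i}$ is the $i$-th elementary symmetric polynomial in the $\alpha_j$, which is exactly $c_i(E)$. Hence $c_n(E \otimes L) = \sum_{i=0}^n c_i(E)\, t^{n-i}$, and substituting $t = c_1(\rho)$ yields the claimed formula. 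This is the standard identity, and the only thing to justify carefully is that it is legitimate to apply it in the equivariant setting.

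For that justification I would invoke functoriality of equivariant Chern classes together with the identification $c_i^G(E) = c_i(E) \in H^{2i}(Z,\Z) \subset H^{2i}_G(Z,\Z)$ valid for a bundle $E$ pulled back from $Z$ under the trivial action, and $c_1^G(\rho) = c_1(\rho) \in H^2(BG,\Z) \subset H^2_G(Z,\Z)$. Under the K\"unneth decomposition $H^*_G(Z,\Z) = H^*(Z \times BG, \Z)$ these classes live in the two tensor factors, so the product $c_i(E)\, c_1(\rho)^{n-i}$ is exactly the cross product appearing when one expands $\prod(\alpha_i + t)$ on $Z \times BG$. The identity $c_n(E\otimes\rho) = \prod_i(\alpha_i + t)$ then follows from the usual (non-equivariant) splitting principle applied to the bundle $E \boxtimes L$ on $Z \times BG$.

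The main obstacle, such as it is, is bookkeeping rather than substance: one must confirm that the splitting principle and the multiplicativity of the total Chern class carry over verbatim to equivariant Chern classes, which is guaranteed by the remark preceding the lemma that $c_i^G$ satisfies the usual functoriality and direct-sum/tensor-product formulas, and that no $\Z$-torsion in $H^*(BG,\Z)$ causes the elementary-symmetric expansion to fail. Since all the Chern classes of $E$ are pulled back from the torsion-free factor $H^*(Z,\Z)$ and the powers of $x$ live in $H^*(BG,\Z)=\Z[x]/mx$, the expansion is a genuine identity in the ring $H^*_G(Z,\Z)$ with no ambiguity, completing the proof.
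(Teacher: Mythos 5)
Your proposal is correct and follows essentially the same route as the paper: identify the Borel construction with $Z \times BG$, view $E \otimes \rho$ as $p_1^*E \otimes p_2^*L_\rho$, and apply the standard tensor-product formula for Chern classes (the paper simply cites \cite[21.10]{BT} where you re-derive it from the splitting principle). The extra care you take about torsion and about the equivariant-to-ordinary reduction is harmless but not needed beyond what the paper already records.
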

\begin{proof}
Let $L_\rho$ be the line bundle on $BG$ corresponding to $\rho$.
Then $E \otimes \rho$ considered as a vector bundle on $(Z \times EG) / G = Z \times BG$ 
is the bundle $p_1^*(E) \otimes p_2^*(L_{\rho})$.
We apply the formula for the total Chern class of a tensor product \cite[21.10]{BT} to obtain
\[
\sum_{i=0}^n c^G_i(E \otimes \rho) =  \sum_{i=0}^n c_i(E) (1+c_1(\rho))^{n-i}. 
\]
Considering the top degree terms yields the desired formula for the $n$-th Chern class.
\end{proof}

Assume now $G = \Z/m$, and let $Z$ be a variety endowed with trivial $G$-action.
Then every $G$-equivariant vector bundle $E$ on $Z$ decomposes into its isotypical components
\[
E = \bigoplus_{j=0}^{n-1} E_j \otimes \rho^{\otimes j}
\]
where $E_j$ is a vector bundles on $Z$, and $\rho$ is the character of $\Z/m$
sending the generator to $e^{{2\pi i}/m}$.

As in Example \ref{ex-BG-mult} let $x = c_1(\rho) \in H^2(BG, \Z)$ be the canonical generator
and using a slight abuse of notation we also denote by $x \in H^2_G(M, \Z)$ the pull-back 
of $x \in H^2(BG, \Z)$.

\begin{lemma}\label{lem-cn-sum}
In the above setup let $E$ be a $G$-equivariant vector bundle on $Z$ of rank $n$.
Then the top equivariant Chern class of $E$ is a homogeneous polynomial in $x$ of degree $2n$ with coefficients in $H^*(Z, \Z)$ of the form
\[
c_n^G(E) = \left( \prod_{j=0}^{n-1} j^{\rk(E_j)}\right) x^n + \; \text{terms of lower degree in $x$} 
\]
\end{lemma}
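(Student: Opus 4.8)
The plan is to reduce the whole computation to the single-character case handled by Lemma \ref{lem-cn}, using the fact that the \emph{top} Chern class is multiplicative under direct sums. First I would recall the Whitney formula in its top-degree form: for a direct sum of bundles the top Chern class is the product of the top Chern classes of the summands. Applying this to the isotypical decomposition $E = \bigoplus_{j=0}^{n-1} E_j \otimes \rho^{\otimes j}$, and writing $r_j = \rk(E_j)$ with $\sum_j r_j = n$, gives
\[
c_n^G(E) = \prod_{j=0}^{n-1} c_{r_j}^G\bigl(E_j \otimes \rho^{\otimes j}\bigr),
\]
so it suffices to analyse each factor separately.

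For a fixed $j$, Example \ref{ex-BG-mult} gives $c_1(\rho^{\otimes j}) = jx$, and Lemma \ref{lem-cn} applied to the character $\rho^{\otimes j}$ on the bundle $E_j$ of rank $r_j$ yields
\[
c_{r_j}^G\bigl(E_j \otimes \rho^{\otimes j}\bigr) = \sum_{i=0}^{r_j} c_i(E_j)\,(jx)^{r_j - i} = j^{r_j} x^{r_j} + \text{terms of lower degree in $x$},
\]
where $c_i(E_j) \in H^{2i}(Z, \Z)$, so that each factor is homogeneous of total degree $2r_j$ in $H^*_G(Z, \Z) = H^*(Z, \Z) \otimes H^*(BG, \Z)$. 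Multiplying over all $j$ then produces a class homogeneous of degree $2\sum_j r_j = 2n$, as claimed. The monomial of top $x$-degree is $x^n$, and to obtain it one is forced to take the leading term $j^{r_j} x^{r_j}$ from every factor (any contribution from some $c_i(E_j)$ with $i \ge 1$ lowers the $x$-degree by $i$ while raising the $H^*(Z)$-degree). Hence the coefficient of $x^n$ lies in $H^0(Z, \Z) = \Z$ and equals $\prod_{j=0}^{n-1} j^{\rk(E_j)}$, giving
\[
c_n^G(E) = \left( \prod_{j=0}^{n-1} j^{\rk(E_j)} \right) x^n + \text{terms of lower degree in $x$}.
\]

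The computation is essentially routine once Lemma \ref{lem-cn} is available, and the main (mild) obstacle is purely bookkeeping: one must verify that the products of the various ``lower degree in $x$'' tails can never reassemble into an $x^n$ term. This is exactly the bigrading argument above — tracking that a factor of $x$ in $H^{2}(BG,\Z)$ and a Chern class in $H^{2i}(Z,\Z)$ contribute independently to the two tensor factors of $H^*_G(Z,\Z)$ — so no cancellation or regrouping can alter the leading coefficient, which is pinned down in $H^0(Z,\Z)$.
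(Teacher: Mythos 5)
Your proposal is correct and follows essentially the same route as the paper's proof: multiplicativity of the top Chern class over the isotypical decomposition, then Lemma \ref{lem-cn} together with $c_1(\rho^{\otimes j}) = jx$ applied to each factor. The only difference is that you spell out the bidegree bookkeeping showing the lower-order tails cannot contribute to the $x^n$ coefficient, which the paper leaves implicit.
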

\begin{proof}
Let $n_j = \rk(E_j)$.
We compute using Lemma \ref{lem-cn} and the fact that $c_1(\rho^{\otimes j}) = jx$:
\[\bal
c_n^G(E) &= \prod_j c_{n_j}^G (E_j \otimes \rho^{\otimes j}) =\\
&= \prod_j \left( \sum_{i=0}^{n_j} c_i(E_j)^G c_{1}(\rho^{\otimes j})^{n_j-i} \right) =\\
&= \prod_j \left( \sum_{i=0}^{n_j} c_i(E_j)^G (jx)^{n_j-i} \right) =\\
&= \prod_j \left( j^{n_j} x^{n_j} + \;\text{lower degree terms} \right) =\\
&= \left( \prod_{j} j^{n_j}\right) x^n + \;\text{lower degree terms}.
\eal\]
\end{proof}

\section{Equivariant Gysin homomorphism}\label{sec-gysin}

For a $G$-equivariant embedding of smooth algebraic varieties $Z \subset M$ we may
define the Gysin homomorphism
\[
i_*^G: H^*_G(Z, \Z) \to H^{*+2e}_G(M, \Z). 
\]
Here $e$ is the codimension of $Z$ in $M$. 
The equivariant Gysin homomorphism is constructed using the ordinary Gysin homomorphism
applied to maps $(Z \times (EG)_N)/G \to (M \times (EG)_N)/G$,
where $(EG)_N$ is an $N$-universal $G$-space, that is a space on which $G$ acts freely with vanishing 
cohomology groups in degrees $0 \le i \le N$. As explained in \cite[IV.3.1]{B}, equivariant cohomology groups
may be computed using $N$-universal $G$-spaces when $N$ is large enough.

Since $(EG)_N$ may be chosen to be smooth manifolds, this allows the construction
of the Gysin homomorphism which then can be shown to be independent of all the choices made
and to satisfy the usual properties such as functoriality and relation to Chern classes.

In the case when the $G$-action on $Z$ is trivial, we have $H^*_G(Z, \Z) = H^*(Z \times BG, \Z)$
so that for every $\gamma \in H^d(BG, \Z)$ there is a homomorphism
\[
H^j(Z, \Z) \overset{\boxtimes \gamma}{\to} H^{j+d}_G(Z, \Z) \overset{i_*^G}{\to} H^{j+d+2e}_G(M, \Z) 
\]
which me may refer to as the $\gamma$-component of map $i_*^G$.

In particular, when $G = \Z/m$, we consider $x^i$-components, where $x \in H^2_G(Z, \Z)$ is defined in the discussion
preceding Lemma \ref{lem-cn-sum}.

\begin{theorem}\label{thm-gysin}
Let $M$ be a smooth variety with a non-trivial $G = \Z/m$-action. Assume that for every $P \in M$ the 
stabilizer $\Stab(P)$ is trivial or coincides with $G$.
Let $Z$ be a union of connected components of $M^G$ of codimension $e$ in $M$.
For all $i,j \ge 0$, the $x^i$-components of the Gysin homomorphism induce injective maps
\[
H^j(Z, \Z) \otimes \Z/m \to H^{j+2i+2e}_G(M, \Z) \otimes \Z/m. 
\]
\end{theorem}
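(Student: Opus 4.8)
The plan is to show that restricting a Gysin class back to $Z$ recovers the original class up to a unit of $\Z/m$, so that the pull-back $i^*$ serves as a left inverse modulo $m$ for each $x^i$-component. Write $i\colon Z \hookrightarrow M$ for the embedding, $N = N_{Z/M}$ for the ($G$-equivariant, since $Z$ is smooth and $G$-invariant) normal bundle of rank $e$, and $\phi_i(\alpha) = i_*^G(\alpha \boxtimes x^i)$ for the $x^i$-component. Since $(i^*\phi_i)\otimes\Z/m = (i^*\otimes\Z/m)\circ(\phi_i\otimes\Z/m)$, it is enough to prove that $i^*\phi_i$ is injective after $\otimes\Z/m$.

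The first and decisive step is to extract from the stabilizer hypothesis that every weight of $N$ is prime to $m$. As $G$ acts trivially on $Z$, the bundle $N$ splits into isotypical pieces $N = \bigoplus_{j=0}^{m-1} N_j \otimes \rho^{\otimes j}$. Because $Z$ is a union of components of the fixed locus, linearizing the action near a point $p \in Z$ gives $(T_pM)^G = T_pZ$, so $N$ has no trivial summand: $N_0 = 0$. Sharper, if $N_j \neq 0$ I would pick a nonzero vector in the $\rho^{\otimes j}$-eigenspace and a nearby point $q \notin Z$ in its direction; then $\Stab(q) = \{g^k : m \mid jk\}$, which by hypothesis is trivial (it is not all of $G$ since $q \notin M^G$), forcing $\gcd(j,m)=1$. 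Thus all characters occurring in $N$ are coprime to $m$.

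Next I would invoke the self-intersection formula for the equivariant Gysin map, $i^* i_*^G(\xi) = \xi \cdot c_e^G(N)$, a consequence of the compatibility of $i_*^G$ with Chern classes, to get
\[
i^*\phi_i(\alpha) = \alpha \cdot x^i \cdot c_e^G(N) \in H^{j+2i+2e}_G(Z,\Z).
\]
By Lemma \ref{lem-cn-sum} and $N_0 = 0$, the top equivariant Chern class is $c_e^G(N) = \sum_{k=0}^e \beta_k x^{e-k}$ with $\beta_k \in H^{2k}(Z,\Z)$ pulled back from $Z$, and leading coefficient $\beta_0 = \prod_{j\geq 1} j^{\rk N_j}$. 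By the previous paragraph $\beta_0$ is a nonempty product of units of $\Z/m$, hence a unit. Therefore $i^*\phi_i(\alpha) = \sum_k (\alpha\beta_k)\,x^{e+i-k}$ is a sum of external products lying in distinct Künneth summands of $H^*(Z\times BG)$. Since $e+i\geq 1$, the class $x^{e+i}$ generates $H^{2(e+i)}(BG)=\Z/m$ by Example \ref{ex-BG-mult}, and projecting onto the summand $H^j(Z,\Z)\otimes H^{2(e+i)}(BG) = H^j(Z,\Z)\otimes\Z/m$ annihilates all $k>0$ terms and leaves $\beta_0\,\alpha\otimes x^{e+i}$. After $\otimes\Z/m$ this projection sends $\bar\alpha$ to $\beta_0\bar\alpha$, an isomorphism of $H^j(Z,\Z)\otimes\Z/m$ because $\beta_0$ is a unit; hence $i^*\phi_i$, and therefore $\phi_i$, is injective after $\otimes\Z/m$.

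The main obstacle is exactly the first step: converting the local stabilizer condition into the statement that the normal weights are prime to $m$, which is what makes $\beta_0$ invertible. Without it a weight sharing a common factor with $m$ would make the leading coefficient a zero-divisor and the argument would break down. The only remaining care is bookkeeping — confirming that $c_e^G(N)$ is genuinely a polynomial in $x$ with $H^*(Z,\Z)$-coefficients, i.e.\ a sum of external products with no $\mathrm{Tor}$ contributions, so that the Künneth projection isolates the leading term cleanly.
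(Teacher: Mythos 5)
Your proof is correct and follows essentially the same route as the paper: the self-intersection formula $i^*i_*^G = \cdot\, c_e^G(N)$, Lemma \ref{lem-cn-sum} for the leading coefficient in $x$, the stabilizer hypothesis forcing all normal weights to be prime to $m$ (so the leading coefficient is a unit of $\Z/m$), and projection onto the top Künneth component to exhibit a left inverse modulo $m$. The paper additionally spells out the disconnected case as a diagonal matrix with unit entries, which your component-wise reading of $\beta_0 \in H^0(Z)\otimes\Z/m$ covers implicitly.
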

\begin{proof}
%We recall that $H^*(BG, \Z) = \Z[x]/(mx)$ \cite[]{At} so that we have an embedding of cohomology rings
%\[
%H^*(M^G)[x] / (mx) = H^*(M^G) \otimes H^*(BG) \subset H^*(M^G \times BG) = H^*_G(M^G).
%\]

We first assume that $Z$ consists of only one component, and then explain the general case.
Let $i: Z \to M$ denote the embedding. 
%Let $e = \codim_M(Z)$ and
Let $\NN$ be the normal bundle of $Z$.

From the standard properties of the Gysin homomorphism it follows that 
the composition $i^*_G i_*^G: H^*_G(Z, \Z) \to H^*_G(Z, \Z)$
is a multiplication map by the element
\[
\eps := i^*_G i_*^G(1) = c_e^G(\NN) \in H^*_G(Z, \Z),
\]
the top equivariant Chern class of the normal bundle.

We use Lemma \ref{lem-cn-sum} to write an expression for $\eps$.
The top coefficient of $\eps$ considered as a polynomial in $x \in H^2(BG, \Z)$ is
$a = \left( \prod_{j} j^{\rk(\NN_j)}\right) \in \Z/m$.

For every $g \in G$ the normal bundle $N_{M^g/M}$ has no component with $g$ acting trivially.
Since for every $g \ne 0$ we have $M^g = M^G$, this implies that $\NN_j = 0$ unless $\gcd(j, n) = 1$.
This means that the coefficient $a \in \Z/m$ is invertible.

We summarize our results in the commutative diagram:
\[\xymatrix{
& H^{j+2i+2e}_G(M) \otimes \Z/m \ar[dr]^{i^*_G} & \\
H^{j+2i}_G(Z) \otimes \Z/m  \ar[ur]^{i_*^G} \ar[rr]^{\cdot \eps} & & H^{j+2i+2e}_G(Z) \otimes \Z/m  \ar[d]^{\text{coef of} \; x^{i+e}} \\
H^{j}(Z) \otimes \Z/m \ar[u]^{\boxtimes x^i} \ar[rr]^{\cdot a} & & H^{j}(Z) \otimes \Z/m \\
}\]

Since $a$ is invertible, the component $H^j(Z, \Z) \otimes \Z/m \to H^{j+2i+2e}(M, \Z) \otimes \Z/m$ is injective.

Now for the general case, that is when $Z$ consists of several components $Z_1, \dots, Z_s$ of codimension $e$ 
the same argument goes through and we obtain a homomorphism
\[
\bigoplus_{k=1}^s H^j(Z_k, \Z) \otimes \Z/m \to \bigoplus_{k=1}^s H^j(Z_k, \Z) \otimes \Z/m.
\]
Since by the assumption on the fixed points different components of $Z$ do not intersect 
this homomorphism is represented by a diagonal matrix. The diagonal elements were seen above to be units of $\Z/m$,
and this shows that the homomorphism $\oplus_{k=1}^s H^j(Z_k, \Z) \otimes \Z/m \to H^{j+2i+2e}_G(M,\Z) \otimes \Z/m$
is injective.
\end{proof}

\begin{corollary}
For a smooth variety $M$ with a $G = \Z/m$-action satisfying the assumption on stabilizers from Theorem \ref{thm-gysin}
the components of the Gysin homomorphism give an embedding
\[
\bigoplus_{D \subset M^G} H^1(D, \Z) \otimes \Z/m \subset H^3_G(M, \Z) \otimes \Z/m
\]
where the sum is over all divisors $D \subset M$ pointwise fixed by $G$.
\end{corollary}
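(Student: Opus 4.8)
The plan is to obtain the Corollary as a direct specialization of Theorem \ref{thm-gysin}, the only work being to line up the degrees correctly. A pointwise fixed divisor $D \subset M$ has codimension $e = 1$, and we wish to land $H^1(D, \Z) \otimes \Z/m$ inside $H^3_G(M, \Z) \otimes \Z/m$. In the notation of Theorem \ref{thm-gysin} the target degree is $j + 2i + 2e$, so setting this equal to $3$ with $e = 1$ forces $j = 1$ and $i = 0$. Thus it is precisely the $x^0$-component of the equivariant Gysin homomorphism, i.e. the plain equivariant pushforward $i_*^G$, that produces the embedding.

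First I would let $Z$ denote the union of all connected components of $M^G$ of codimension one, that is, the divisors $D \subset M$ pointwise fixed by $G$. By the remark following Theorem \ref{thm-seq}, the stabilizer hypothesis guarantees that $M^G$ is a disjoint union of smooth subvarieties; hence $Z$ is the disjoint union of these divisors and cohomology is additive over connected components, giving
\[
H^1(Z, \Z) = \bigoplus_{D \subset M^G} H^1(D, \Z).
\]

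Next I would apply Theorem \ref{thm-gysin} to this $Z$ with $e = 1$, $j = 1$, $i = 0$. The theorem directly yields an injective map
\[
H^1(Z, \Z) \otimes \Z/m \hookrightarrow H^{3}_G(M, \Z) \otimes \Z/m,
\]
and feeding in the decomposition of $H^1(Z,\Z)$ above produces exactly the asserted embedding
\[
\bigoplus_{D \subset M^G} H^1(D, \Z) \otimes \Z/m \hookrightarrow H^3_G(M, \Z) \otimes \Z/m.
\]

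The hard part will be essentially nonexistent here, since the real content—that the leading coefficient $a = \prod_j j^{\rk(\NN_j)}$ of the top equivariant Chern class of the normal bundle is a unit in $\Z/m$, and that over disjoint fixed components the resulting map is represented by a diagonal matrix with unit entries—was already established inside the proof of Theorem \ref{thm-gysin}. The only thing genuinely to verify is the index bookkeeping and the observation that the divisorial part of $M^G$ is exactly the locus contributing in degree three via these components, so that the indexing set of the direct sum matches the one in the theorem.
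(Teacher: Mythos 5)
Your proposal is correct and is exactly the intended argument: the paper states this corollary as an immediate specialization of Theorem \ref{thm-gysin} (taking $Z$ to be the union of the codimension-one components of $M^G$, with $e=1$, $j=1$, $i=0$), and your degree bookkeeping and identification of $H^1(Z,\Z)$ with the direct sum over the fixed divisors match this. Nothing further is needed.
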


\begin{remark}\label{rem-gysin}
The proof of Theorem \ref{thm-gysin} shows that in the case when assumptions of Theorem 1.1 are satisfied, 
that is when $M$ is smooth projective and $H_1(M, \Z) = 0$, then
the isomorphism constructed in the proof of Theorems \ref{main-thm}, \ref{thm-H3}, 
which was a restriction map of equivariant cohomology,
is up to an invertible scalar multiple on each component $M^G$,
the inverse of the Gysin homomorphism $H^1(M^G, \Z) \otimes \Z/m \to H^3_G(M, \Z)$
(in this case the target cohomology group is $m$-torsion by Theorem \ref{thm-equiv}, so we don't need to tensor it with $\Z/m$).
\end{remark}

\begin{example}
Let $M = \P^1$, and let the generator of $G=\Z/m$ act on $M$ by $[x:y] \mapsto [x:\zeta y]$, where $\zeta = e^{2\pi i/m}$ is a primitive $m$-th root of unity.

In this case $H^*_G(\P^1)$ as well as its ring structure can be computed explicitly since the homotopy quotient $(\P^1 \times EG) / G$ is a $\P^1$-bundle
over $BG$. More precisely, let $L$ be a line bundle on $BG$ associated to the character $\rho: z \mapsto \zeta z$, then
\[
(\P^1 \times EG) / G = \P_{BG}(\OO \oplus L).
\]

The cohomology algebra of a projective bundle depends on the Chern classes of the vector bundle \cite[(20.6)]{BT}.
By our conventions $c_1(L) = x$, the generator of $H^2(BG, \Z)$ so that we have $c_1(\OO \oplus L) = x$ and $c_2(\OO \oplus L) = 0$.
We find that the equivariant cohomology of $\P^1$ as an algebra over $H^*(BG, \Z) = \Z[x]/(mx)$ is given by
\[
H^*_G(\P^1, \Z) = H^*(\P_{BG}(\OO \oplus L),\Z) = \Z[x,h] / (mx, h^2+xh), 
\]
so that
\[
H^j_{G}(\P^1) = 
\left\{\begin{array}{ll}
\Z \cdot 1,     & j = 0 \\
\Z/m \cdot x \oplus \Z \cdot h, & j = 2 \\
\Z/m \cdot x^k \oplus \Z/m \cdot x^{k-1}h, & j = 2k > 2 \\
0, & j \; \text{odd} \\
\end{array}\right.
\]

The two fixed points $0 = [1:0]$ and $\infty = [0:1]$ correspond to the two disjoint sections of the $\P^1$-bundle:
\[\bal
i_0: BG &= \P(\OO) \subset \P(\OO\oplus L) \\
i_\infty: BG &= \P(L) \subset \P(\OO\oplus L), \\
\eal\]
and one can compute that the Gysin homomorphisms 
\[
i_{0, *}, i_{\infty,*}: H^*(BG, \Z) \to H^*_G(\P^1)
%i_{0,*}(1) &= x \in H^2_G(\P^1) = \Z \oplus \Z/m\\
%i_{\infty,*}(1) &= x+h \in H^2_G(\P^1) = \Z \oplus \Z/m\\
\]
are multiplication maps by $h+x$ and $h$ respectively.
The sum of the two homomorphisms
\[
i_{0,*} + i_{1,*}: H^*(BG, \Z)^{\oplus 2} \to H^*_G(\P^1, \Z) 
\]
as well as its reduction modulo $m$ is injective, in
accordance with Theorem \ref{thm-gysin}.

This example demonstrates that as opposed to the case of Theorem \ref{thm-H3} there is no inclusion $H^i(Z, \Z) \otimes \Z/m \to H^{i+2e}_G(M, \Z)$ 
($Z$ is the union components of fixed locus of codimension $e$ in $M$)
in general: the action of $G$ on $\P^1$ has two fixed points, but the $m$-torsion in $H^2_G(\P^1, \Z)$ is only $\Z/m$.
\end{example}

\medskip
\medskip

\address{
{\bf Evgeny Shinder}\\
School of Mathematics and Statistics \\
University of Sheffield \\
The Hicks Building \\
Hounsfield Road \\
Sheffield S3 7RH\\
e-mail: {\tt eugene.shinder@gmail.com}
%\email{evgenyshinder2011@u.northwestern.edu}
}

\end{document}